\newtheorem{Th}{Theorem}
\newtheorem{Le}{Lemma}
\newtheorem{De}{Definition}
\newtheorem{Co}{Corollary}
\newtheorem{Rm}{Remark}
\newcommand{\e}{\mathrm{e}}
\newcommand{\ci}{\mathrm{i}}
\begin{document}
\title{Perturbations of Gibbs semigroups and the non-selfadjoint harmonic oscillator}
\author{Lyonell Boulton\footnote{Department of Mathematics and Maxwell Institute for Mathematical Sciences, Heriot-Watt University, Edinburgh, EH14 4AS, United Kingdom. E-mail: \texttt{L.Boulton@hw.ac.uk} }}

\date{2nd July 2018}

\maketitle

\begin{abstract}
Let $T$ be the generator of a $C_0$-semigroup $e^{-Tt}$ which is of trace class for all $t>0$ (a Gibbs semigroup). Let $A$ be another closed operator, $T$-bounded with $T$-bound equal to zero. In general $T+A$ might not be the generator of a Gibbs semigroup. In the first half of this paper we give sufficient conditions on $A$ so that $T+A$ is the generator of a Gibbs semigroup. We determine these conditions in terms of the convergence of the Dyson-Phillips expansion in suitable Schatten-von Neumann norms.  

In the second half of the paper we consider $T=H_\vartheta=-\e^{-\ci\vartheta}\partial_x^2+\e^{\ci\vartheta}x^2$, the non-selfadjoint harmonic oscillator, on $L^2(\mathbb{R})$ and $A=V$, a locally integrable potential growing like $|x|^{\alpha}$  at infinity for $0\leq \alpha<2$. We establish that the Dyson-Phillips expansion converges in $r$ Schatten-von Neumann norm in this case for $r$ large enough and show that $H_\vartheta+V$ is the generator of a Gibbs semigroup $\mathrm{e}^{-(H_\vartheta+V)\tau}$ for $|\arg{\tau}|\leq \frac{\pi}{2}-|\vartheta|\not=\frac{\pi}{2}$.  From this we determine high energy asymptotics for the eigenvalues and the resolvent norm of $H_\vartheta+V$.
\end{abstract}

\bigskip \noindent {\bf Keywords:} Perturbation of Gibbs semigroups, Dyson-Phillips  expansion, non-selfadjoint Schr{\"o}dinger operators.

\bigskip \noindent {\bf AMS MSC2010:} 47D06, 81Q12, 81Q15.

\newpage

\section{Introduction}
The non-selfadjoint harmonic oscillator 
\[
      H_{\vartheta}=-\e^{-\ci\vartheta}\partial_x^2+\e^{\ci\vartheta}x^2
\qquad \qquad \Big(-\frac{\pi}{2}<\vartheta<\frac{\pi}{2}\Big),
\]
acting on $L^2(\mathbb{R})$ with domain $\operatorname{D}(H_{\vartheta})=H^2(\mathbb{R})\cap \widehat{H^2(\mathbb{R})}$ studied by Exner in \cite{1983Exner} and Davies in \cite{1999Davies2}, has become one of the reference models in the theory of pseudospectra and non-selfadjoint phenomena. \emph{Cf.} \cite{2017KrejcirikSiegl,2016Viola,2006Pravda-Starov,2001Boulton}, \cite[Chapter~14]{2007Davies} and \cite[p.105]{2005TrefethenEmbree}.  This operator is $J$-selfadjoint with respect to the conjugation $Ju(x)=\overline{u}(x)$ and $H^*_{\vartheta}=H_{-\vartheta}$, so it is selfadjoint only when $\vartheta=0$.  
As it is also m-sectorial, $H_\vartheta$ is the generator of a $C_0$-semigroup $\e^{-H_{\vartheta}\tau}$ for all $|\arg{\tau}|\leq \frac{\pi}{2}-|\vartheta|$.  In fact, the classical Mehler's formula extends to $\vartheta\not=0$ and non-real $\tau$ in a maximal angular semi-module which is much larger than this sector, rendering a trace class (Gibbs) semigroup. See \cite{2016Viola,2017AlemanViola} and Theorem~\ref{signandasymptoticsofrealpartswj} below.  

In this paper we consider perturbations of $H_\vartheta$ by locally integrable complex potentials $V$ such that
\begin{equation}     \label{precisecondpot}
      |V(x)|\leq a|x|^\alpha+b \qquad \qquad \forall x\in \mathbb{R}
\end{equation}
for some $0\leq \alpha <2$, $a>0$ and $b\in\mathbb{R}$. As $V$ is $H_{\vartheta}$-bounded with relative bound 0, the non-selfadjoint Schr{\"o}dinger operators $H_{\vartheta}+V$ are also $J$-selfadjoint in the same domain $\operatorname{D}(H_\vartheta+V)=\operatorname{D}(H_{\vartheta})$.

In Section~\ref{sec3} we show that $H_\vartheta+V$ is the generator of a Gibbs semigroup $\e^{-(H_\vartheta+V)\tau}$ for all $|\arg{\tau}|\leq \frac{\pi}{2}-|\vartheta|$ when $\vartheta\not=0$.  According to the work of Angelescu \emph{et al} \cite{1974Angelescuetal} and of Zagrebnov \cite{1989Zagrebnov}, see also \cite{2001CachiaZagrebnov}, a class~$\mathcal{P}$  perturbation\footnote{See \cite[p70]{1980Davies}.} of an m-sectorial operator whose real part is the generator of a Gibbs semigroup is also the generator of a Gibbs semigroup. But what is remarkable and not obvious for $H_\vartheta+V$ from this, is the fact that the trace class property extends all the way to the edges of the maximal sector. We obtain the latter, by showing that the Dyson-Phillips expansion of the perturbed semigroup converges in an $r$ Schatten-von Neumann norm for sufficiently large $r$ (it does not converge for $r\leq 2$ for $\alpha$ too close to 2). 

As the framework turns out to be general and may be applicable in other contexts, we begin by developing an abstract perturbation theory of generators of Gibbs semigroups in Section~\ref{sec1}. The results in the papers \cite{1974Angelescuetal} and \cite{1989Zagrebnov} mentioned above, rely on an inequality due to Ginibre and Gruber, \cite{1969GinibreGruber}, which cannot be easily extended to the non-sectorial setting. Therefore we take here a completely different route, that of the Dyson-Phillips expansion. This allows generators which are not necessarily m-sectorial, but the perturbations ought to be more than just of class~$\mathcal{P}$. They must  satisfy an analogous condition of integrability, but with respect to a Schatten-von Neumann norm. Details in Lemma~\ref{Th:classPCpperturbation} and Definition~\ref{De:classPCq} below.

The spectrum of $H_{\vartheta}$ is
$
     \operatorname{Spec}(H_{\vartheta})=\left\{2n+1\right\}_{n=0}^\infty,
$
with corresponding normalised eigenfunctions \cite{1999Davies}
\[
     \Psi_n(x)=\e^{\frac{\ci\vartheta}{4}}\Phi_n\left(\e^{\frac{\ci\vartheta}{2}}x\right)
\]
where $\{\Phi_n\}_{n=0}^\infty$ are the normalised eigenfunctions of $H_0$. In Section~\ref{sec3} (Corollary~\ref{Co:asymptoticeigenvalues}) we show that the eigenvalues of $H_\vartheta+V$ have a real part growing at least like $n$ and a distance from the rays $|\arg(z)|=|\vartheta|$ growing at least like $n^{1/2}$ as $n\to\infty$. We know \cite{2001Boulton,2006Pravda-Starov} that the distance from these rays to points $z$ on the boundary of the pseudospectra of $H_\vartheta$ increases exactly like $\Re(z)^{1/3}$ for $z\to \infty$. Therefore, despite of the fact that $V$ might be unbounded, the eigenvalues of the non-selfadjoint Schr{\"o}dinger operator $H_\vartheta+V$ eventually lie way in the interior of the pseudospectra of $H_\vartheta$. This is surprising, if we recall that the $\varepsilon$-pseudospectrum is the union of the spectra of all bounded perturbations of $H_\vartheta$  with norm less than (or equal) $\varepsilon$.

The study of eigenvalue asymptotics of non-selfadjoint Schr{\"o}dinger operators has attracted interest from different communities in recent years, see \cite{2018Frank} and references therein. Since condition on the decay of the Schatten-von Neumann norm of the resolvent at infinity are related to conditions of integrability of the semigroup at small times via the Laplace transform, our approach is closer to the framework of relative Schatten-von Neumann class perturbation developed in \cite{2009DemuthHansmannKatriel}. 

Our final statement,  Corollary~\ref{Co:asymptoticperturbedHO}, gives an indication about the shape of the pseudospectra of $H_\vartheta+V$. 
We show that \[
    \lim_{\rho\to \infty} \| (H_\vartheta+V -
    \e^{\pm \ci \vartheta} \rho-\beta)^{-1}\|=0 
    \]
 for all $\beta\in \mathbb{R}$. 
Therefore the distance from the real axis to points $z$ on the boundary of the pseudospectra of $H_\vartheta+V$ is $o(\Re(z))$ as $z\to \infty$. This complements findings in \cite{1999Davies,1999Zworski,2007Pravda-Starov,2004Denckeretal,2017KrejcirikSiegl}, about the resolvent norm growth for non-selfadjoint Schr\"odinger operators with potentials large at infinity. Our results refine in various ways those published in \cite[Chapter~3]{2001Boulton2} many years ago.

I have to thank A.~Doiku and P.~Siegl with whom I sustained a number of useful discussions. Also D.~Krej{\v c}i{\v r}{\'\i}k and J.~Viola for their valuable comments.

\section{Gibbs semigroups and their perturbations} \label{sec1}

Let $\mathcal{H}$ be a Hilbert space.  Below the operator $T$ acting on $\mathcal{H}$ is said to be the generator of a $C_0$-semigroup $\e^{-Tt}$ for $t>0$, if $-T$ is so in the usual sense \cite[Chapter~X and \S10.6]{1957HillePhillips}. We are only concerned about $C_0$-semigroups of compact operators and begin by briefly recalling various well-known facts. 

Let $\e^{-Tt}$ be compact for all $t>0$. Then $\e^{-Tt}$ is continuous in the uniform operator topology for all $t>0$, the resolvent of $T$ is compact and
\begin{equation} \label{eq:spectheo}
     \operatorname{Spec}(\e^{-Tt})=\{\e^{-\lambda t}\,:\,\lambda \in \operatorname{Spec}(T)\}\cup\{0\}.
\end{equation}  
Let
\[
     \varphi(T)=\inf\{a \in \mathbb{R}\,:\,\exists\,M>0,\ \|\e^{-Tt}\|\leq M\e^{a t}\ \forall t>0\}.
\] 
For any $t_0>0$ \cite[pro.1.2.2]{1996vanNeerven},
\[
     \varphi(T)=\frac{\log \operatorname{rad}(\e^{-T t_0})}{t_0}
     =\lim_{t\to\infty} \frac{\log \|\e^{-Tt}\|}{t}.
\]
Here $\operatorname{rad}(W)$ is the spectral radius of $W$. Combining this with  \eqref{eq:spectheo} yields
\begin{equation*} \label{eq:spectralabscisacompact}
    \varphi(T)=-\inf\{\Re(\lambda)\,:\, \lambda\in\operatorname{Spec}(T)\}.
\end{equation*}
That is, the spectral bound and the uniform growth bound of $\e^{-Tt}$ coincide, due to compactness. This property will play an important role below.

The following statement is not a direct consequence of classical results such as the Hille-Yosida theorem. It will serve our purposes later on, hence we include a self-contained proof. Many more precise asymptotic properties of similar nature are known, \emph{cf.}  \cite{1996vanNeerven} and \cite[Ch. 8]{2007Davies}.  

\begin{Th} \label{Th:resolventnormatinfinity}
Let $T$ be the generator of a $C_0$-semigroup such that $\e^{-Tt}$ is compact for all $t>0$.
Then for all $r<-\varphi(T)$ fixed, 
\[
     \lim_{y\to \pm\infty} \|(T-r-\ci y)^{-1}\|=0.
\] 
\end{Th}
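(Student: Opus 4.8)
The plan is to represent the resolvent along the vertical line $\Re(\mu)=r$ as a Fourier transform of an operator-valued $L^1$ function, and then to invoke a Riemann--Lebesgue lemma for Banach-space-valued integrands.

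First I would fix constants $a$ and $M>0$ with $\varphi(T)<a<-r$ and $\|\e^{-Tt}\|\le M\e^{at}$ for all $t>0$; this is possible since $r<-\varphi(T)$. By the standard generation theory, the half-plane $\Re(\mu)<-\varphi(T)$ lies in the resolvent set of $T$ and
\[
    (T-\mu)^{-1}=\int_0^\infty \e^{\mu t}\,\e^{-Tt}\, dt,
\]
the integral converging absolutely as a Bochner integral in $\mathcal{L}(\mathcal{H})$ there. Set $g(t)=\e^{rt}\e^{-Tt}$ for $t>0$ and $g(t)=0$ for $t\le0$. Then $\|g(t)\|\le M\e^{(r+a)t}$ with $r+a<0$, and since $t\mapsto\e^{-Tt}$ is norm continuous on $(0,\infty)$ --- a consequence of the compactness of the semigroup, as recalled above --- $g$ is strongly measurable, so $g\in L^1(\mathbb{R};\mathcal{L}(\mathcal{H}))$. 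Taking $\mu=r+\ci y$ in the displayed identity gives
\[
    (T-r-\ci y)^{-1}=\int_{\mathbb{R}}\e^{\ci y t}\,g(t)\, dt=\widehat g(-y),
\]
the Fourier transform of $g$.

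Next I would apply the vector-valued Riemann--Lebesgue lemma: for any Banach space $X$ and any $g\in L^1(\mathbb{R};X)$ one has $\|\widehat g(y)\|_X\to0$ as $|y|\to\infty$. This follows from the familiar density argument --- finite sums of functions of the form $\chi_I\cdot W$ with $I$ a bounded interval and $W\in X$ are dense in $L^1(\mathbb{R};X)$, for each of them the Fourier transform vanishes at infinity by the scalar Riemann--Lebesgue lemma, and $\|\widehat g-\widehat h\|_\infty\le\|g-h\|_{L^1}$. With $X=\mathcal{L}(\mathcal{H})$ this yields $\|(T-r-\ci y)^{-1}\|=\|\widehat g(-y)\|\to0$ as $y\to\pm\infty$, which is the claim.

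The argument is essentially soft, and I do not expect a genuine obstacle. The two points needing care are the strong measurability of $g$ --- which is precisely where the compactness hypothesis is used, through norm continuity of $t\mapsto\e^{-Tt}$ on $(0,\infty)$ --- and the fact that, because $r<-\varphi(T)$ is strict, the entire line $\{r+\ci y:y\in\mathbb{R}\}$ lies in the interior of the region of validity of the Laplace representation, so $(T-r-\ci y)^{-1}$ is well defined for every real $y$. If one prefers not to quote the vector-valued Riemann--Lebesgue lemma, the same conclusion is reached by splitting $\int_0^\infty=\int_0^\delta+\int_\delta^N+\int_N^\infty$, estimating the first and third integrals in operator norm uniformly in $y$ by means of $\|g(t)\|\le M\e^{(r+a)t}$, and on $[\delta,N]$ approximating $g$ in operator norm by a step function --- using its uniform norm continuity there --- before appealing to the scalar Riemann--Lebesgue lemma.
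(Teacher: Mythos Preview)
Your proof is correct and follows essentially the same strategy as the paper: express the resolvent via the Laplace integral and apply a Banach-space-valued Riemann--Lebesgue lemma, using compactness of the semigroup only to secure norm continuity (and hence Bochner integrability) of the integrand. The one difference is purely cosmetic: the paper first performs a short algebraic manoeuvre to bound $\|(T-r-\ci y)^{-1}\|$ by the norm of a \emph{finite}-interval Fourier integral $\int_0^s \e^{\ci y t}\e^{rt}\e^{-Tt}\,\mathrm{d}t$, whereas you observe directly that $g(t)=\e^{rt}\e^{-Tt}$ already lies in $L^1(\mathbb{R};\mathcal{L}(\mathcal{H}))$ and apply Riemann--Lebesgue on the whole half-line, which is slightly more direct.
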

\begin{proof}
 Without loss of generality we can assume that $\varphi(T)=0$. Taking inverse Laplace transform \cite[thm. 2.8]{1980Davies}, we get
\begin{equation} \label{eq:Laplacetransform}
    (T-z)^{-1}f=\int_0^\infty \e^{zt}\e^{-Tt}f \,\mathrm{d}t
\end{equation}
for all $f\in\mathcal{H}$ and $z$ such that $\Re(z)<0$. Let $z=r+\ci y$, let $M>1$ be such that $\|\e^{-Tt}\|\leq M$ for all $t>0$ and let 
 \[
     s>-\frac{\log M}{r}>0. 
 \] 
Then
 \begin{align*}
     \e^{-zs}(T-z)^{-1}f &= \int_0^s \e^{z(t-s)}\e^{-Tt}f\,\mathrm{d}t+\int_s^\infty
     \e^{z(t-s)}\e^{-Tt}f\,\mathrm{d}t \\
     &=\int_0^s \e^{z(t-s)}\e^{-Tt}f\,\mathrm{d}t+\int_0^\infty
     \e^{zt}\e^{-T(t+s)}f\,\mathrm{d}t \\
     &=\int_0^s \e^{z(t-s)}\e^{-Tt}f\,\mathrm{d}t+\e^{-Ts}(T-z)^{-1}f
 \end{align*}
 for all $f\in\mathcal{H}$. Applying the triangle inequality and then placing all the resolvent norms to the left hand side, yields
 \[
     \|(T-r-\ci y)^{-1}\|\leq \frac{\e^{-rs}}{\e^{-rs}-M}\left\|
     \int_0^s \e^{\ci yt}\e^{rt}\e^{-Tt}\,\mathrm{d}t\right\|.
 \]
Here the constant outside the norm is positive and does not depend upon $y$. 

Since $\e^{-Tt}$ is continuous in the uniform operator topology, it is also locally integrable (Riemann and Bochner) with respect to the associated norm. Let
\[
      \hat{F}(y)=\int_{0}^s \e^{-\ci y t} \e^{rt}\e^{-Tt}\,\mathrm{d}t.
\]
Then $\hat{F}(y)$ is the Fourier transform of the operator-valued function \[t\mapsto \e^{rt}\e^{-Tt}\xi_{[0,s]}(t)\] which lies in $L^1(\mathbb{R};\mathcal{B}(\mathcal{H}))$. Here $s$ is fix. Thus, a version of the Riemann-Lebesgue lemma for Bochner spaces (the proof is identical to the classical result \cite[sec. 7.2]{1992Folland} as the integrand above is a limit of step functions) ensures that $\|\hat{F}(y)\|\to 0$ as $y\to\pm\infty$.
\end{proof}

Let $q\geq 1$. We denote by $\mathcal{C}_q$ the $q$ Schatten-von Neumann operator ideal and by $\|\cdot\|_q$ the corresponding norm. As usual, here $\mathcal{C}_{\infty}$ is the compact operators. Below we write $\|\cdot\|_{\infty}\equiv \|\cdot\|$ for operators.

By virtue of the semigroup property, $\e^{-T(s+t)}=\e^{-Ts}\e^{-Tt}$,  combined with the fact that $\|\cdot\|_{q}$ is non-increasing as $q$ increases, it follows that $\{\e^{-Tt}\}_{t>0}\subset \mathcal{C}_q$ for some $q<\infty$ if and only if $\{\e^{-Tt}\}_{t>0}\subset \mathcal{C}_1$. A $C_0$-semigroup with this property is often called a Gibbs semigroup, \cite{1971Uhlenbrock, 1974Angelescuetal, 1989Zagrebnov}. We will adhere to this terminology.
 
If $\e^{-Tt}$ is a Gibbs semigroup, then $t\mapsto \e^{-Tt}$ is continuous in the trace norm $\|\cdot\|_1$ for all $t>0$.  If the generator is unbounded, the $C_0$-semigroup is always discontinuous at $t=0$ in $\|\cdot\|_{\infty}$ and hence in all of the other norms $\|\cdot\|_q$. We now determine a class of perturbations of the generator which preserve the finite trace.

A closed operator $A$ is some times  said to be a class~$\mathcal{P}$ perturbation of the generator $T$ iff
\[
     \operatorname{D}(A)\supset \bigcup_{t>0} \e^{-Tt}(\mathcal{H})
\qquad \text{ and } \qquad
     \int_0^1 \|A \e^{-Tt}\|_{\infty} \mathrm{d}t<\infty,
\]
see \cite[p70]{1980Davies}. If $A$ is a class~$\mathcal{P}$ perturbation of $T$, then $\operatorname{D}(A)\supset \operatorname{D}(T)$, $A$ is $T$-bounded with relative bound equal to zero and the closed operator $(T+A)$ is the generator of a $C_0$-semigroup on $\mathcal{H}$.  The perturbed $C_0$-semigroup is given in terms of the unperturbed one via a  Dyson-Phillips expansion
\begin{equation} \label{eq:Dysonexpansion}
       \e^{-(T+A)t}=\sum_{k=0}^\infty (-1)^k W_k(t) 
\end{equation}
which is absolutely convergent in $\|\cdot\|_\infty$ for all $0<t\leq a$ where
\[
    \int_0^a \|A\e^{-Tt}\|_\infty \mathrm{d} t<1
    \]
and    
\begin{equation} \label{eq:Dysonterms}
\begin{aligned}
W_0(t)&=\e^{-Tt} \\
W_k(t)&=\int_{s=0}^t W_{k-1}(t-s) A\e^{-Ts} \ \mathrm{d}s. 
\end{aligned}
\end{equation}
The integrals are convergent in $\|\cdot\|_\infty$ for all $t>0$.
From this it follows that the variation of parameter formula
\begin{equation}  \label{eq:variationparameters}
    \e^{-(T+A)t}=\e^{-Tt}+\int_{0}^t \e^{-(T+A)(t-s)}A\e^{-Ts} \mathrm{d}s
\end{equation}
holds true, where the integral converges in $\|\cdot\|_\infty$ for all $0<t\leq a$.

If $T$ is m-sectorial and $\e^{-\Re(T)t}$ is a Gibbs semigroup, then  $\e^{-(T+A)t}$ is also a Gibbs semigroup whenever $A$ is a class~$\mathcal{P}$ perturbation of $T$, see \cite{1974Angelescuetal,1989Zagrebnov,2001CachiaZagrebnov}. The following example shows that these hypotheses cannot be weakened that easily. Let $b\in \mathbb{R}$ and $\{e_n\}_{n=1}^\infty$ be an orthonormal basis of $\mathcal{H}$. Let
\[
    T=\sum_{n=1}^\infty (\ci n^3+n)|e_n\rangle \langle e_n|
\qquad \text{and} \qquad A\equiv A_b=\sum_{n=1}^\infty bn|e_n\rangle \langle e_n|
    \]
in their maximal domains. Then, $T+A_b$ is: the generator of a Gibbs semigroup for $b>-1$, only the generator of a unitary group for $b=-1$ and
not even a generator of a $C_0$-semigroup for $b<-1$. This, despite of the fact that $A_bT^{-1}$ is trace class for all $b\in \mathbb{R}$. Note that $\|A_b\e^{-Tt}\|_\infty\sim t^{-1}$ as $t\to 0$, so $A_b$ is not a class $\mathcal{P}$ perturbation of $T$. Also that $A_b$ destroys the m-sectoriality of $T$ for $b<-1$.

In the next lemma, $T$ is not assumed to be m-sectorial and $\Re(T)$ might not be the generator of a compact semigroup. The proof follows closely the line of arguments in \cite[theorems~3.1-3.5]{1980Davies}, replacing the operator norm with the norm of $\mathcal{C}_q$. In this proof we could have used directly the variation of parameters formula (as we do later on), but we prefer to highlight the range of absolute convergence of the Dyson-Phillips expansion in $\|\cdot\|_q$. 
 
\begin{Le} \label{Th:classPCpperturbation}
Let $T$ be the generator of a $C_0$-semigroup such that $\e^{-Tt}\in \mathcal{C}_1$ for all $t>0$. Let $A$ be a closed operator such that
\begin{equation} \label{eq:classPCqpert}
     \operatorname{D}(A)\supset \bigcup_{t>0} \e^{-Tt}(\mathcal{H})
\qquad \text{ and } \qquad
     \int_0^{1} \|A \e^{-Tt}\|_q \mathrm{d}t<\infty 
     \end{equation}
for some $q<\infty$. Then $T+A$ with domain $\mathrm{D}(T)$ is also the generator of a  $C_0$-semigroup such that $\e^{-(T+A)t}\in \mathcal{C}_1$ for all $t>0$. Moreover
\begin{equation} \label{eq:asymptato}
      \|\e^{-(T+A)t}\|_q=O(\|\e^{-Tt}\|_q) \qquad t\to 0^+.
\end{equation}     
\end{Le}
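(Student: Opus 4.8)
The plan is to reproduce the classical Dyson--Phillips construction of the perturbed semigroup \eqref{eq:Dysonexpansion}--\eqref{eq:Dysonterms}, but carrying the norm $\|\cdot\|_q$ in parallel with $\|\cdot\|_\infty$ at every step. Since $\|\cdot\|_\infty\le\|\cdot\|_q$, hypothesis \eqref{eq:classPCqpert} already forces $\int_0^1\|A\e^{-Tt}\|_\infty\,\mathrm{d}t<\infty$, so $A$ is a class~$\mathcal{P}$ perturbation of $T$; in particular $T+A$ with domain $\operatorname{D}(T)$ generates a $C_0$-semigroup given by \eqref{eq:Dysonexpansion}, and all the facts recalled above in that setting apply. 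By absolute continuity of the integral in \eqref{eq:classPCqpert} I would fix $a\in(0,1]$ with $\gamma:=\int_0^a\|A\e^{-Ts}\|_\infty\,\mathrm{d}s<1$ and set $M:=\sup_{t\in[0,a]}\|\e^{-Tt}\|_\infty<\infty$, the latter being finite by the exponential bound for $C_0$-semigroups. A straightforward induction on \eqref{eq:Dysonterms}, as in \cite[theorems~3.1--3.5]{1980Davies}, then gives $\|W_k(t)\|_\infty\le M\gamma^k$ for all $k\ge0$ and $t\in[0,a]$, so that \eqref{eq:Dysonexpansion} converges absolutely in $\|\cdot\|_\infty$ on $(0,a]$ and $\|\e^{-(T+A)t}\|_\infty\le M(1-\gamma)^{-1}$ there.

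Next I would estimate the terms of the expansion in $\|\cdot\|_q$. Writing $c(t):=\int_0^t\|A\e^{-Ts}\|_q\,\mathrm{d}s$, which is finite for $t\le a$ and satisfies $c(t)\to0$ as $t\to0^+$, I feed the operator-norm bound for $W_{k-1}$ into the recursion \eqref{eq:Dysonterms} and use the ideal inequality $\|BC\|_q\le\|B\|_\infty\|C\|_q$ together with $t-s\in[0,a]$ to obtain, for $k\ge1$ and $t\in(0,a]$,
\[
   \|W_k(t)\|_q\le\int_0^t\|W_{k-1}(t-s)\|_\infty\,\|A\e^{-Ts}\|_q\,\mathrm{d}s\le M\gamma^{k-1}c(t).
\]
Each $W_k(t)$ then lies in $\mathcal{C}_q$ (for $k\ge1$ it is a $\mathcal{C}_q$-valued Bochner integral of an integrable integrand by this bound, the base case being $W_0(t)=\e^{-Tt}\in\mathcal{C}_1\subset\mathcal{C}_q$), and summing the geometric series gives $\sum_{k\ge1}\|W_k(t)\|_q\le Mc(t)(1-\gamma)^{-1}$. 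Thus \eqref{eq:Dysonexpansion} converges absolutely in $\|\cdot\|_q$ for every $t\in(0,a]$; its sum must be $\e^{-(T+A)t}$, since the series also converges to the latter in $\|\cdot\|_\infty$, and therefore
\[
   \|\e^{-(T+A)t}\|_q\le\|\e^{-Tt}\|_q+M(1-\gamma)^{-1}c(t),\qquad 0<t\le a.
\]

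Finally I would derive the two conclusions of the lemma. For $t>a$ the identity $\e^{-(T+A)t}=\e^{-(T+A)(t-a)}\e^{-(T+A)a}$ exhibits $\e^{-(T+A)t}$ as a bounded operator times an element of $\mathcal{C}_q$, so $\e^{-(T+A)t}\in\mathcal{C}_q$ for all $t>0$; the semigroup-property argument already used above for $\e^{-Tt}$ (factor $\e^{-(T+A)t}=(\e^{-(T+A)t/m})^m$ with an integer $m\ge q$ and apply the generalised Schatten--H\"older inequality) then yields $\e^{-(T+A)t}\in\mathcal{C}_1$ for all $t>0$, i.e.\ a Gibbs semigroup. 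For \eqref{eq:asymptato} it remains to note that $\|\e^{-Tt}\|_q\to\infty$ as $t\to0^+$: for any finite-rank orthogonal projection $P$ of rank $N$ one has $\|P\e^{-Tt}P\|_q\le\|\e^{-Tt}\|_q$, while $P\e^{-Tt}P\to P$ in operator norm because $\e^{-Tt}\to I$ strongly and the range of $P$ is finite-dimensional, whence $\liminf_{t\to0^+}\|\e^{-Tt}\|_q\ge\|P\|_q=N^{1/q}$ and $N$ is arbitrary. Combined with $c(t)\to0$ this gives $c(t)=o(\|\e^{-Tt}\|_q)$, so the last displayed inequality becomes $\|\e^{-(T+A)t}\|_q\le(1+o(1))\|\e^{-Tt}\|_q$ as $t\to0^+$, which is \eqref{eq:asymptato}.

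The only point where care is genuinely needed — and which I expect to be the main, though routine, obstacle — is the bookkeeping of the $\mathcal{C}_q$-valued Bochner integrals in \eqref{eq:Dysonterms}, and in particular the realisation that one must estimate $W_{k-1}$ in $\|\cdot\|_\infty$ but $A\e^{-Ts}$ in $\|\cdot\|_q$, never in a single norm. The requisite measurability of $s\mapsto W_{k-1}(t-s)A\e^{-Ts}$ follows exactly as in the operator-norm treatment of \cite{1980Davies} once one observes that $s\mapsto A\e^{-Ts}$ is $\|\cdot\|_q$-continuous on $(0,\infty)$, since $A\e^{-Ts}=(A\e^{-Ts_0})\e^{-T(s-s_0)}$ with $A\e^{-Ts_0}\in\mathcal{C}_q$ and $\e^{-T(s-s_0)}\to I$ strongly as $s\to s_0>0$.
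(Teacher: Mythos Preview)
Your argument is essentially the paper's: carry the Dyson--Phillips construction in $\|\cdot\|_q$ rather than just $\|\cdot\|_\infty$. Your bookkeeping is in fact a mild streamlining of it --- you bound $W_{k-1}$ in operator norm and only the rightmost factor $A\e^{-Ts}$ in $\|\cdot\|_q$, obtaining $\|W_k(t)\|_q\le M\gamma^{k-1}c(t)$, whereas the paper iterates entirely in $\|\cdot\|_q$ to get $\|W_k(t)\|_q\le M\,c(t)^k$ via repeated convolution-type estimates on the simplex. Your route avoids the multi-integral manipulation and even delivers the slightly sharper bound $\|\e^{-(T+A)t}\|_q\le\|\e^{-Tt}\|_q+o(1)$; the finite-rank-projection argument that $\|\e^{-Tt}\|_q\to\infty$ is not in the paper (which is content with $\|\e^{-Tt}\|_q+O(1)$, using only that $\|\e^{-Tt}\|_q\ge\|\e^{-Tt}\|_\infty$ stays bounded away from zero).

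One technical slip, however, in your last sentence: right-multiplying a fixed $K\in\mathcal{C}_q$ by operators converging merely \emph{strongly} to $I$ does not in general give convergence in $\|\cdot\|_q$ (nor even in operator norm). On a Hilbert space you can rescue this because the adjoint semigroup is also $C_0$, so $\e^{-T(s-s_0)}\to I$ $*$-strongly and a Gr\"umm-type argument applies; but the clean fix is the one the paper uses. For $0<b<\min(s,s_0)$ write
\[
   A\e^{-Ts}-A\e^{-Ts_0}=(A\e^{-Tb})\bigl(\e^{-T(s-b)}-\e^{-T(s_0-b)}\bigr),
\]
with $A\e^{-Tb}$ bounded, and invoke the $\|\cdot\|_1$-continuity (hence $\|\cdot\|_q$-continuity) of a Gibbs semigroup at positive times. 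This is exactly the measurability input your Bochner-integral step needs.
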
 
\begin{proof}
 The hypotheses ensure that $A$ is a class~$\mathcal{P}$ perturbation of $T$. Hence $T+A$ with domain $\mathrm{D}(T)$ is the generator of a $C_0$-semigroup. Moreover $\e^{-(T+A)t}$ is given by \eqref{eq:Dysonexpansion} convergent in $\|\cdot\|_{\infty}$ for $t>0$ small enough. By considering $T-\varphi(T)$ instead of $T$ for the general case, we can assume without loss of generality that $\|\e^{-Tt}\|_{\infty}\leq M$ for all $t> 0$.

Let us show first that in \eqref{eq:Dysonterms}, $W_k(t)\in\mathcal{C}_q$ for all $k\in \mathbb{N}$ and $t>0$. We begin with $k=1$. Since 
\begin{align*}
    \int_{n-1}^n\|A\e^{-Ts}\|_q\, \mathrm{d}s&=
    \int_{0}^1\|A\e^{-T(s+n-1)}\|_q\, \mathrm{d}s \\
    &\leq M\int_0^1 \|A\e^{-Ts}\|_q\mathrm{d}s<\infty
    \qquad \forall n\in \mathbb{N}.
\end{align*}
Then
\[
    \int_{s=0}^t\|A\e^{-Ts}\|_q\,\mathrm{d}s<\infty \qquad \forall t>0. 
\]
Fix $t>0$.  Then 
\[
   \int_{s=0}^t \|\e^{-T(t-s)}A\e^{-Ts}\|_q\,\mathrm{d}s \leq
   M\int_{s=0}^t \|A\e^{-Ts}\|_q\,\mathrm{d}s<\infty.
   \]
   The $\mathcal{C}_q$-valued function $s\mapsto \e^{-Ts}$ is continuous in $\|\cdot\|_q$ for all $s>0$, because it is continuous in the trace norm.  Then the $\mathcal{C}_q$-valued function $s\mapsto \e^{-T(t-s)}A\e^{-Ts}$ is also continuous with respect to the norm $\|\cdot\|_q$. Indeed, for fix $b>0$ such that $b<\frac{t}{2}$ and for $s>\frac{t}{2}$, we have
\[
   \|A\e^{-Tt}-A\e^{-Ts}\|_q\leq \|A\e^{-Tb}\|_\infty \|e^{-T(t-b)}-\e^{-T(s-b)}\|_q\to 0 \qquad s\to t.
   \]
Therefore the integrand in the expression for $W_1(t)$ is Riemann integrable in $\mathcal{C}_q$ in all segments of the form $[\alpha,1]$ for $\alpha>0$. Note that this integral is improper in the norm $\|\cdot\|_q$ in the segment $(0,1]$ but the right hand side of \eqref{eq:classPCqpert} ensures that this improper integral is convergent.  Hence  $W_1(t)\in \mathcal{C}_q$
 and
 \[
      \|W_1(t)\|_q\leq M\int_{s=0}^t \|A\e^{-Ts}\|_q\,\mathrm{d}s.
      \]

 Now consider $k=2$. Let
 \[
      F(x)=\begin{cases} \e^{-Tx} &x>0 \\
           0 & \text{otherwise}. \end{cases}
 \]
 Then
 \begin{align*}
 \int_{s=0}^t\int_{u=0}^s &\|\e^{-T(t-s)}A\e^{-T(s-u)}A\e^{-Tu} \|_q\  \mathrm{d}u\ \mathrm{d}s \\ &\leq M \int_{s=0}^t\int_{u=0}^s \|AF(s-u)AF(u) \|_q\  \mathrm{d}u\ \mathrm{d}s\\
 &= M \int_{s=0}^t\int_{u=0}^t \|AF(s-u)AF(u) \|_q\  \mathrm{d}u\ \mathrm{d}s \\
 &\leq M \int_{u=0}^t\int_{s=0}^t \|AF(s-u)\|_{\infty}\|AF(u) \|_q\  \mathrm{d}s\ \mathrm{d}u \\
 &\leq M \int_{u=0}^t \|AF(u)\|_q\int_{s=0}^t\|AF(s-u)\|_{q}  \mathrm{d}s\ \mathrm{d}u \\
 &= M \int_{u=0}^t \|AF(u)\|_q\int_{x=-u}^{t-u}\|AF(x)\|_{q}  \mathrm{d}x\ \mathrm{d}u \\
 &\leq M \int_{u=0}^t \|AF(u)\|_q\ \mathrm{d}u \int_{x=0}^{t}\|AF(x)\|_q  \mathrm{d}x \\
 &= M\left[ \int_{s=0}^t \|A\e^{-Ts}\|_q\ \mathrm{d}s   \right]^2<\infty.
 \end{align*}
 Hence, by continuity, the $\mathcal{C}_q$-valued function $(s,u)\mapsto \e^{-T(t-s)}A\e^{-T(s-u)}A\e^{-Tu}$ is integrable (Riemann with the improper integral once again convergent) with respect to the norm $\|\cdot\|_q$  in the region $0<u<s<t$. Thus the integral $W_2(t)$ in \eqref{eq:Dysonterms} also converges in the norm of $\mathcal{C}_q$, $W_2(t)\in \mathcal{C}_q$ and
 \[
    \|W_2(t)\|_q\leq M\left[ \int_{s=0}^t \|A\e^{-Ts}\|_q\ \mathrm{d}s   \right]^2.
    \]
    
Similar arguments show that all $W_k(t)\in\mathcal{C}_q$ and
 \[
          \|W_k(t)\|_q\leq M\left[ \int_{s=0}^t \|A\e^{-Ts}\|_q\ \mathrm{d}s   \right]^k\qquad \forall t>0,\, k=3,4,\ldots
 \]
 
 In order to show that $\e^{-(T+A)t}\in\mathcal{C}_q$, it is then enough to prove the convergence in the norm of $\mathcal{C}_q$ of the series at the right hand side of \eqref{eq:Dysonexpansion} for $t>0$ small enough. Choose $a>0$ such that 
 \[
      \int_{s=0}^{a} \|A\e^{-Ts}\|_q \mathrm{d}s<1.
 \]
 Then for all $t\in(0,a]$ the series $\sum_{k=1}^{\infty}\|W_k(t)\|_q<\infty$. This guarantees the convergence of the right hand side of \eqref{eq:Dysonexpansion} and $\e^{-(T+A)t}\in\mathcal{C}_q$ for $0<t\leq a$. The latter conclusion for $t>a$ is a consequence of the semigroup property. Hence $\e^{-(T+A)t}$ is also a Gibbs semigroup.

Finally, note that for $0<t\leq a$ in the above calculation, we have
\begin{align*}
      \|\e^{-(T+A)t}\|_q&\leq \|\e^{-Tt}\|_q+\sum_{k=1}^\infty \|W_k(t)\|_q \\
      &\leq \|\e^{-Tt}\|_q+M\sum_{k=1}^\infty \left(\int_0^a \|A\e^{-Ts}\|_q \mathrm{d} s\right)^k.
\end{align*}
As the series at the right hand side converges independent of $t$, then there exists $\tilde{M}>0$ independent of $t$, such that
\[
      \|\e^{-(T+A)t}\|_q\leq \|\e^{-Tt}\|_q+\tilde{M} \qquad \qquad 0<t\leq a.
\]   
\end{proof}

If $T$ and $A$ satisfy the hypothesis of Lemma~\ref{Th:classPCpperturbation}, then the improper integral in the variation of parameters formula \eqref{eq:variationparameters} converges in $\|\cdot\|_q$. Indeed the map $s\mapsto \e^{-(T+A)(t-s)}A\e^{-Ts}$  is $\|\cdot\|_q$ continuous and
\[
\left\| \e^{-(T+A)(t-s)}A\e^{-Ts}\right\|_q\leq
\left\| \e^{-(T+A)(t-s)}\right\|_\infty \left\|A\e^{-Ts}\right\|_q
\]
where
\[
     \left\| \e^{-(T+A)(t-s)}\right\|_\infty=O\left(\left\| \e^{-T(t-s)}\right\|_\infty\right)=O(1) \qquad s\to 0 \text{ and } s\to t.
\]

\begin{Rm} \label{Rm:connectionotherresults}
Both the results of \cite{1974Angelescuetal} and those of \cite{1989Zagrebnov} concerning perturbations of m-sectorial generators, are consequence of an inequality originally found by Ginibre and Gruber \cite{1969GinibreGruber} extended from the selfadjoint setting. Details apparently missing in \cite{1989Zagrebnov} were completed in \cite{2001CachiaZagrebnov}. In the latter, this extension was formulated for m-sectorial operators.  Unfortunately we do not have an analogue inequality at hand under the more general hypothesis above.
\end{Rm}  

Lemma~\ref{Th:classPCpperturbation} induces the following terminology which will simplify the discussions below.

 \begin{De} \label{De:classPCq}
Let $1\leq q \leq \infty$. The closed operator $A$ is said to be a class~$\mathcal{PC}_q$ perturbation of the generator $T$ of a $C_0$-semigroup $\{\e^{-Tt}\}_{t>0}\subset \mathcal{C}_1$, if \eqref{eq:classPCqpert} are satisfied.
\end{De}

 If $A$ is a class~$\mathcal{PC}_q$ perturbation of $T$, it is also a  class~$\mathcal{PC}_p$ perturbation of $T$ for all $p>q$.

If two closed operators $A_1$ and $A_2$ are class~$\mathcal{PC}_q$ perturbations of the generator of a Gibbs semigroup, it is not necessarily the case that the sum $A_1+A_2$ (on a suitable domain) is closable. For this reason, the class described in Definition~\ref{De:classPCq} is not additive. By following the ideas of \cite[\S13.3-13.5]{1957HillePhillips}, it is possible to extend this definition to perturbations that are not necessarily closable, then obtain an additive class and an equivalence relation for generators. The details of this require developing extra notation that will not serve our focused purpose in the next section when considering $T=H_\theta$. Therefore we do not address this for the time being.

Now an example. Let $T=T^*$ be the selfadjoint operator with compact resolvent given by
\[
    T=\sum_{n=1}^\infty n|e_n\rangle \langle e_n|,
    \]
    where $\{e_n\}_{n=1}^\infty$ is an orthonormal basis of $\mathcal{H}$.
    Then
\[
    \e^{-Tt}=\sum_{n=1}^\infty \e^{-nt}|e_n\rangle \langle e_n|.
    \]
    Hence
\[
    \|\e^{-Tt}\|_1=\frac{\e^{-t}}{1-\e^{-t}}    
    \]
    and $\e^{-Tt}$ is a Gibbs semigroup. For $\alpha\leq 1$, let
    \[
       A_\alpha=T^{\alpha}=\sum_{n=1}^\infty n^{\alpha}|e_n\rangle \langle e_n|
       \]
    in its maximal domain.   
    Then
    \[
    \|A_\alpha\e^{-Tt}\|_q=\begin{cases} \max_{n\in \mathbb{N}}n^{\alpha}\e^{-tn} & q=\infty \\
    \left( \sum_{n=1}^\infty n^{\alpha q}\e^{-t q n}    \right)^{1/q} & 1\leq q <\infty \end{cases}
    \]
    For $q=\infty$, we have $\|A_\alpha \e^{-Tt}\|_\infty \sim t^{-\alpha}$ as $t\to 0^+$. Then $A_\alpha$ is a class~$\mathcal{PC}_\infty$ (class~$\mathcal{P}$) perturbation of $T$ for all $\alpha<1$. For $q<\infty$,
\[
          \|A_\alpha\e^{-Tt}\|^q_q=\mathrm{Li}_{(-\alpha q)}(\e^{-tq})
          \]
          where $\mathrm{Li}_s(z)$ is the polylogarithm function. Since
          \[
\lim_{z\to 1}(1-z)^{1-s}\operatorname{Li}_s(z)=\Gamma(1-s) \qquad \forall s<1 
          \]
          \cite[9.557]{2007GradshteynRyzhik}, for all $q\alpha>-1$
          \[
\|A_\alpha\e^{-Tt}\|_q\sim t^{-\frac{q\alpha +1}{q}} \qquad t\to 0^+. 
\]
Then, $A_\alpha$ is a class~$\mathcal{PC}_q$ perturbation of $T$ if and only if
$q>\frac{1}{1-\alpha}$ (assuming $q\geq 1$ as in the definition above).  This shows that, the smaller the $q$, the ``multiplicative smaller'' the perturbation of a generator of a Gibbs semigroup should be, in order to be included in the class~$\mathcal{PC}_q$. It also shows that, although they are nested, these classes are not equal in general. Note that for $\alpha=0$, $A_0=I$ is not a class~$\mathcal{PC}_q$ perturbation of $T$ for $q=1$, but it is so for all $q>1$. We can relate this example to the harmonic oscillator by taking $T=\frac12(H_0+1)$.

Let us now determine that the Definition~\ref{De:classPCq} is symmetric. The next lemma follows the template of \cite[Lemma~13.5.1]{1957HillePhillips}.

\begin{Le} \label{Le:consistencyequivalence}
  Let $T$ be the generator of $\e^{-Tt}\in \mathcal{C}_1$ for all $t>0$. If $A_1$ and $A_2$ are two closed operators such that they are both class~$\mathcal{PC}_q$ perturbations of $T$, then
  \[
     \operatorname{D}(A_2)\supset \bigcup_{t>0} \e^{-(T+A_1)t}(\mathcal{H})
\qquad \text{ and } \qquad
     \int_0^{1} \|A_2 \e^{-(T+A_1)t}\|_q \mathrm{d}t<\infty    
  \]
\end{Le}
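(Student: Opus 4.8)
The plan is to establish the two claims separately, using the Dyson--Phillips machinery already developed for $\e^{-(T+A_1)t}$. For the domain inclusion, recall from Lemma~\ref{Th:classPCpperturbation} that $\e^{-(T+A_1)t}$ is given by the series $\sum_{k\geq 0}(-1)^k W_k(t)$ with $W_0(t)=\e^{-Tt}$ and $W_k(t)=\int_0^t W_{k-1}(t-s)A_1\e^{-Ts}\,\mathrm{d}s$, convergent in $\|\cdot\|_q$ (hence in $\|\cdot\|_\infty$) for $t$ small and, via the semigroup property, for all $t>0$. The key observation is that each $W_k(t)$ maps $\mathcal{H}$ into $\operatorname{D}(T)=\operatorname{D}(A_2)$: indeed $W_0(t)(\mathcal{H})=\e^{-Tt}(\mathcal{H})\subset\operatorname{D}(A_2)$ by hypothesis on $A_1,A_2$ being class~$\mathcal{PC}_q$, and for $k\geq 1$ the outermost factor is $W_{k-1}(t-s)$ which has already been shown to be a bounded operator with $\e^{-T(t-s')}$-type behaviour; more cleanly, I would write $\e^{-(T+A_1)t}=\e^{-(T+A_1)(t/2)}\e^{-(T+A_1)(t/2)}$ and use the variation of parameters formula \eqref{eq:variationparameters} to see that $\e^{-(T+A_1)(t/2)}(\mathcal{H})\subset \operatorname{D}(T)$ (this is standard: the perturbed semigroup preserves the domain of the generator $T+A_1$, which equals $\operatorname{D}(T)$), hence is contained in $\operatorname{D}(A_2)$.

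For the integrability claim I would estimate $\|A_2\e^{-(T+A_1)t}\|_q$ using the variation of parameters formula applied to $\e^{-(T+A_1)t}$, namely
\[
     A_2\e^{-(T+A_1)t}=A_2\e^{-Tt}+\int_0^t A_2\e^{-(T+A_1)(t-s)}A_1\e^{-Ts}\,\mathrm{d}s.
\]
Wait --- the cleaner route is to expand $\e^{-(T+A_1)t}$ itself and not $A_2$ applied to it all at once; so instead I would multiply the Dyson--Phillips series by $A_2$ on the left. Using $A_2 W_0(t)=A_2\e^{-Tt}$ and, for $k\geq 1$, splitting $W_k(t)$ as $\int_0^{t}\e^{-(T+A_1)(t-s)}A_1\e^{-Ts}\,\mathrm{d}s$ rearranged (or iterating $W_k(t)=\int_0^t W_{k-1}(t-s)A_1\e^{-Ts}\,\mathrm{d}s$ down to $W_0$), one gets an estimate of the form
\[
     \int_0^1\|A_2\e^{-(T+A_1)t}\|_q\,\mathrm{d}t
     \leq \int_0^1\|A_2\e^{-Tt}\|_q\,\mathrm{d}t\,\Big(1+\sum_{k\geq 1}\big(M{\textstyle\int_0^1}\|A_1\e^{-Ts}\|_q\,\mathrm{d}s\big)^k\Big)<\infty,
\]
where the crucial point is that the leftmost operator in every term of the reorganised series is $A_2\e^{-Ts}$ for some $s$, producing a $\mathcal{C}_q$ factor $\|A_2\e^{-Ts}\|_q$ which is integrable by hypothesis, while all the inner $A_1\e^{-T\cdot}$ factors can be estimated in $\mathcal{C}_q$ just as in the proof of Lemma~\ref{Th:classPCpperturbation}, and the $\|\e^{-T\cdot}\|_\infty$ factors are bounded by $M$. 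The combinatorial bookkeeping here is essentially identical to the two explicit cases ($k=1,2$) worked out in the proof of that lemma, with one factor of $\|A_2 F(\cdot)\|_q$ replacing what was there $\|A_1 F(\cdot)\|_q$ at the appropriate spot.

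The main obstacle I anticipate is making the reorganisation of the series rigorous when passing $A_2$ inside: one must be careful that $A_2$, being only closed (not bounded), may be commuted past the $\|\cdot\|_q$-convergent integrals defining the $W_k$ only after justifying that the relevant integrands stay in $\operatorname{D}(A_2)$ and that $A_2$ applied to the integral equals the integral of $A_2$ applied to the integrand. This is a closed-graph / dominated-convergence argument: one writes $W_k(t)=\int W_{k-1}(t-s)A_1\e^{-Ts}\,\mathrm{d}s$ with the integral being a $\|\cdot\|_q$-limit of Riemann sums, notes each sum lies in $\operatorname{D}(A_2)$ because $\e^{-T\cdot}(\mathcal{H})\subset\operatorname{D}(A_2)$ and the later factors are bounded, then uses closedness of $A_2$ together with the just-established $\mathcal{C}_q$-bound on $A_2$ times each term to pass to the limit; the bound on $\|A_2 W_k(t)\|_q$ obtained above is exactly what supplies the needed domination. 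Apart from this technical point the argument is a direct transcription of the proof of Lemma~\ref{Th:classPCpperturbation} and of \cite[Lemma~13.5.1]{1957HillePhillips}.
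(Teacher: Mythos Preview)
Your Dyson--Phillips route does work, but it is genuinely different from the paper's argument, and a couple of statements in your domain discussion are not right as written.

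\textbf{Comparison with the paper.} The paper does \emph{not} re-expand the series. It first invokes the classical class~$\mathcal{P}$ result \cite[Lemma~13.5.1]{1957HillePhillips} to obtain both the domain inclusion and
\[
\int_0^1 \|A_2\e^{-(T+A_1)t}\|_\infty\,\mathrm{d}t<\infty
\]
in one stroke. Then, from the single variation of parameters identity \eqref{eq:variationparameterswithA}, it estimates
\[
\|A_2\e^{-(T+A_1)(t-s)}A_1\e^{-Ts}\|_q\leq \|A_2\e^{-(T+A_1)(t-s)}\|_\infty\,\|A_1\e^{-Ts}\|_q
\]
and a Fubini switch gives the $\mathcal{C}_q$ integrability immediately. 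This is precisely the formula you wrote down first and then abandoned; the missing ingredient you needed was that the $\|\cdot\|_\infty$ factor is already integrable by the classical theory, so the identity is \emph{not} self-referential. The paper's approach is shorter because it bootstraps from the $q=\infty$ case; yours is more self-contained but repeats the combinatorics of Lemma~\ref{Th:classPCpperturbation} with one $A_1$ replaced by $A_2$.

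\textbf{Issues in your domain argument.} First, $\operatorname{D}(T)=\operatorname{D}(A_2)$ is false; one only has $\operatorname{D}(A_2)\supset\operatorname{D}(T)$. Second, the parenthetical ``the perturbed semigroup preserves the domain of the generator'' only says $\e^{-(T+A_1)t}\big(\operatorname{D}(T)\big)\subset\operatorname{D}(T)$; it does \emph{not} give $\e^{-(T+A_1)t}(\mathcal{H})\subset\operatorname{D}(T)$, which fails for non-analytic $C_0$-semigroups. What you actually need, and what your last paragraph correctly sketches, is that each $W_k(t)f\in\operatorname{D}(A_2)$ (not $\operatorname{D}(T)$): unroll $W_k$ so that the leftmost factor is $\e^{-T u_0}$, use $\e^{-Tu_0}(\mathcal{H})\subset\operatorname{D}(A_2)$, and pass $A_2$ through the integral and the series via closedness and the $\mathcal{C}_q$ bounds you derive. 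Finally, your displayed geometric series should be over $[0,a]$ with $a$ small enough that $\int_0^a\|A_1\e^{-Ts}\|_q\,\mathrm{d}s<1$, not over $[0,1]$; the stray factor $M$ is also spurious once the expansion is written correctly, since every exponential carries an $A_j$ on its left.
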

\begin{proof}
Since $A_j$ are class~$\mathcal{P}$ perturbations of $T$, by virtue of \cite[Lemma~13.5.1]{1957HillePhillips}\footnote{In the notation of \cite{1957HillePhillips} this is written as  $A_j\!\!\upharpoonright_{\mathrm{D}(-T)}\in \mathfrak{B}(-T)$ and here we are also invoking \emph{loc. cit.} Theorem~13.3.1.}, we know that
     \[
     \operatorname{D}(A_2)\supset \bigcup_{t>0} \e^{-(T+A_1)t}(\mathcal{H})
     \]
     as required in the first part of the conclusion. Moreover
     \begin{equation} \label{eq:inequalityforintegrability}
         \int_0^{1} \|A_2 \e^{-(T+A_1)t}\|_{\infty} \mathrm{d}t<\infty. 
         \end{equation}

In order to show the second part of the conclusion, we use the variation of parameters formula. From Lemma~\ref{Th:classPCpperturbation}, it follows that $\e^{-(T+A_j)t}\in \mathcal{C}_1$ for all $t>0$. Also, we know that
  \[
     \e^{-(T+A_1)t}=\e^{-Tt}+\int_{0}^t \e^{-(T+A_1)(t-s)}A_1\e^{-Ts} \mathrm{d}s
     \]
     where the integral converges in $\|\cdot\|_q$ (for $t$ small enough). Since all the improper integrals involved in the following expression are Riemann integrals and they are convergent in $\|\cdot\|_{\infty}$ and since the operator $A_2$ is closed, we have
\begin{equation} \label{eq:variationparameterswithA}
     A_2\e^{-(T+A_1)t}=A_2\e^{-Tt}+\int_{0}^t A_2\e^{-(T+A_1)(t-s)}A_1\e^{-Ts} \mathrm{d}s,
     \end{equation}
     see \cite[Theorem~3.3.2]{1957HillePhillips}. Also,
     $(s,t)\mapsto  A_2\e^{-(T+A_1)(t-s)}A_1\e^{-Ts}$ is continuous in $\|\cdot\|_q$. Moreover,
     \begin{align*}
       \int_{t=0}^1& \left\| \int_{s=0}^t A_2 \e^{-(T+A_1)(t-s)}A_1\e^{-Ts} \mathrm{d}s\right\|_q\mathrm{d}t \\ &\leq \int_{t=0}^1 \int_{s=0}^t \left\|A_2 \e^{-(T+A_1)(t-s)}A_1\e^{-Ts} \right\|_q \mathrm{d}s\mathrm{d}t \\
       & \leq  \int_{t=0}^1 \int_{s=0}^t \left\|A_2 \e^{-(T+A_1)(t-s)}\right\|_\infty \left\|A_1\e^{-Ts} \right\|_q \mathrm{d}s\mathrm{d}t \\
       &=  \int_{t=0}^1 \int_{s=0}^1 \left\|A_2 F_1(t-s)\right\|_\infty \left\|A_1F(s) \right\|_q \mathrm{d}s\mathrm{d}t \\
         &=  \int_{s=0}^1 \left\|A_1 F(s)\right\|_q \int_{t=0}^1  \left\|A_2F_1(t-s) \right\|_{\infty} \mathrm{d}t\mathrm{d}s \\
         &\leq   \int_{s=0}^1 \left\|A_1 F(s)\right\|_q \mathrm{d}s \int_{x=0}^1  \left\|A_2F_1(x) \right\|_{\infty} \mathrm{d}x.
     \end{align*}
     Here we write $F(x)$ as in the proof of Lemma~\ref{Th:classPCpperturbation} and
\[
      F_1(x)=\begin{cases} \e^{-(T+A_1)x} &x>0 \\
           0 & \text{otherwise}. \end{cases}
      \]
The hypothesis and \eqref{eq:inequalityforintegrability}, yield that the this double integral is finite. Hence the second conclusion follows from this, integrating \eqref{eq:variationparameterswithA}.      
\end{proof}

\begin{Co} \label{Co:equalasymptotics}
  Let $q\geq 1$. Let $A$ be a class~$\mathcal{PC}_q$ perturbation of the generator $T$ of a Gibbs semigroup. Then
  \[
     \|\e^{-(T+A)t}\|_q\sim \|\e^{-Tt}\|_q \qquad t\to 0^+.
  \]
\end{Co}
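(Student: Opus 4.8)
The plan is to extract the asymptotic equivalence directly from the variation of parameters formula, whose convergence in $\|\cdot\|_q$ was recorded in the discussion immediately following Lemma~\ref{Th:classPCpperturbation}. Fix $a>0$ as in that lemma, so that for $0<t\le a$ one has
\[
\e^{-(T+A)t}=\e^{-Tt}+\int_{0}^{t}\e^{-(T+A)(t-s)}A\e^{-Ts}\,\mathrm{d}s ,
\]
with the integral converging in $\|\cdot\|_q$. By the reverse triangle inequality in $\mathcal{C}_q$, together with the ideal bound $\|BC\|_q\le\|B\|_\infty\|C\|_q$,
\[
\bigl|\,\|\e^{-(T+A)t}\|_q-\|\e^{-Tt}\|_q\,\bigr|\le\int_{0}^{t}\bigl\|\e^{-(T+A)(t-s)}\bigr\|_\infty\,\bigl\|A\e^{-Ts}\bigr\|_q\,\mathrm{d}s .
\]

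Next I would bound the error term. Since $\e^{-(T+A)u}$ is a $C_0$-semigroup, $\|\e^{-(T+A)u}\|_\infty$ is bounded on $(0,a]$ by some constant $C\ge1$, so the right-hand side above is at most $C\int_{0}^{t}\|A\e^{-Ts}\|_q\,\mathrm{d}s$. By the hypothesis \eqref{eq:classPCqpert}, the function $s\mapsto\|A\e^{-Ts}\|_q$ lies in $L^1(0,1)$, whence $\int_{0}^{t}\|A\e^{-Ts}\|_q\,\mathrm{d}s\to0$ as $t\to0^+$. Therefore $\|\e^{-(T+A)t}\|_q-\|\e^{-Tt}\|_q\to0$ as $t\to0^+$.

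It then remains to upgrade this additive estimate to the claimed multiplicative statement, and this is the only point that needs a little care. Because $\|\cdot\|_q\ge\|\cdot\|_\infty$ and, by strong continuity of the semigroup, $\|\e^{-Tt}f\|\to\|f\|$ for any fixed unit vector $f$, one has $\liminf_{t\to0^+}\|\e^{-Tt}\|_q\ge1$, so $\|\e^{-Tt}\|_q\ge\tfrac12$ for all sufficiently small $t>0$; dividing the previous limit by $\|\e^{-Tt}\|_q$ gives $\|\e^{-(T+A)t}\|_q/\|\e^{-Tt}\|_q\to1$, which is the assertion. It is worth noting that the variation of parameters route is genuinely needed for the sharp $\sim$: combining Lemma~\ref{Le:consistencyequivalence} (applied with generator $T+A$ and perturbation $-A$) with \eqref{eq:asymptato} only yields the weaker two-sided comparison $\|\e^{-(T+A)t}\|_q\asymp\|\e^{-Tt}\|_q$. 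Likewise, one should resist replacing the lower bound above by the stronger claim $\|\e^{-Tt}\|_q\to\infty$ (true for $q<\infty$ but requiring a compactness argument), since the elementary bound used here works uniformly for all $q\in[1,\infty]$.
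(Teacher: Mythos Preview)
Your argument is correct. The variation-of-parameters estimate, the ideal inequality $\|BC\|_q\le\|B\|_\infty\|C\|_q$, the uniform bound on $\|\e^{-(T+A)u}\|_\infty$ over $(0,a]$, and the lower bound $\liminf_{t\to0^+}\|\e^{-Tt}\|_q\ge1$ via $\|\cdot\|_q\ge\|\cdot\|_\infty$ and strong continuity are all valid, and together yield the ratio statement $\|\e^{-(T+A)t}\|_q/\|\e^{-Tt}\|_q\to1$.

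Your route is, however, genuinely different from the paper's. The paper does not re-enter the variation-of-parameters formula at all; it argues by \emph{symmetry}. Since $A$ and $-A$ are both class~$\mathcal{PC}_q$ perturbations of $T$, Lemma~\ref{Le:consistencyequivalence} (with $A_1=A$, $A_2=-A$) shows that $-A$ is a class~$\mathcal{PC}_q$ perturbation of $T_2=T+A$; then \eqref{eq:asymptato} applied to the pair $(T_2,-A)$ gives $\|\e^{-Tt}\|_q=O(\|\e^{-(T+A)t}\|_q)$, which combined with the forward bound \eqref{eq:asymptato} yields the two-sided comparison. Your observation that this literally delivers only $\asymp$ is accurate, and in fact the paper appears to use $\sim$ in the weak sense of two-sided boundedness (this is all that is invoked later, e.g.\ in deriving \eqref{eq:asymptrace}). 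What your direct approach buys is the sharper conclusion that the \emph{difference} $\|\e^{-(T+A)t}\|_q-\|\e^{-Tt}\|_q$ tends to zero, hence the ratio tends to $1$, and it does so without appealing to Lemma~\ref{Le:consistencyequivalence}; what the paper's approach buys is brevity and a clean structural picture (the relation ``differs by a $\mathcal{PC}_q$ perturbation'' is symmetric), at the cost of the coarser asymptotic.
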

\begin{proof}
Let $T_2=T+A$ with $\mathrm{D}(T_2)=\mathrm{D}(T)$. Then $-A$ is a class~$\mathcal{PC}_q$ perturbation of $T_2$ as a consequence of Lemma~\ref{Le:consistencyequivalence} with $A_1=-A=A_2$.
\end{proof}

If $A$ is both accretive and $T$-bounded with bound less than one,  then $T+A$ is the generator of a $C_0$-semigroup \cite[Corollary~3.8]{1980Davies}. In Lemma~\ref{Th:classPCpperturbation}, the perturbation $A$ is allowed to be non-accretive, at the cost of being relatively compact (and more). See Remark~\ref{remark:dissipativityofthepert} below.

\begin{Le} \label{Th:perofclassPCqpert}
Let $A$ be a class~$\mathcal{PC}_q$ perturbation of the generator $T$.  If $0\not \in \operatorname{Spec}A$, then the closure of any other closable operator $B$ such that $\mathrm{D}(B)\supset \mathrm{D}(A)$ is also a class~$\mathcal{PC}_q$ perturbation of  $T$.
\end{Le}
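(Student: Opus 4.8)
The plan is to exploit the fact that $\mathrm{D}(B)\supset \mathrm{D}(A)$ together with the closed graph theorem to produce a bounded operator that intertwines $B$ and $A$, and then feed this back into the two defining conditions of Definition~\ref{De:classPCq}. First I would observe that since $0\notin\operatorname{Spec}(A)$, the operator $A^{-1}$ is bounded and everywhere defined on $\mathcal{H}$, with range exactly $\mathrm{D}(A)$. The composition $BA^{-1}$ is then defined on all of $\mathcal{H}$. I would argue it is closed: if $f_n\to f$ and $BA^{-1}f_n\to g$, set $u_n=A^{-1}f_n$, so $u_n\to A^{-1}f=:u$ (boundedness of $A^{-1}$) and $Bu_n\to g$; since $B$ is closable and $u_n\to u$ with $Bu_n$ convergent, one gets $u\in\mathrm{D}(\overline{B})$ and $\overline{B}u=g$, i.e. $\overline{B}A^{-1}f=g$. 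A closed operator defined on all of $\mathcal{H}$ is bounded by the closed graph theorem, so $C:=\overline{B}A^{-1}\in\mathcal{B}(\mathcal{H})$ with $\|C\|<\infty$. Consequently $\overline{B}=CA$ on $\mathrm{D}(A)$, and more importantly $\overline{B}\,\e^{-Tt}=C\,A\,\e^{-Tt}$ for every $t>0$, using that $\e^{-Tt}(\mathcal{H})\subset\mathrm{D}(A)$.

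From here the verification of the two conditions of Definition~\ref{De:classPCq} for $\overline{B}$ is immediate. The domain inclusion $\mathrm{D}(\overline{B})\supset\bigcup_{t>0}\e^{-Tt}(\mathcal{H})$ follows from $\mathrm{D}(\overline{B})\supset\mathrm{D}(B)\supset\mathrm{D}(A)\supset\bigcup_{t>0}\e^{-Tt}(\mathcal{H})$, the last inclusion being the hypothesis that $A$ is a class~$\mathcal{PC}_q$ perturbation of $T$. For the integrability, the factorization gives
\[
\int_0^1\|\overline{B}\,\e^{-Tt}\|_q\,\mathrm{d}t=\int_0^1\|C\,A\,\e^{-Tt}\|_q\,\mathrm{d}t\leq\|C\|\int_0^1\|A\,\e^{-Tt}\|_q\,\mathrm{d}t<\infty,
\]
again by the hypothesis on $A$. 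Hence $\overline{B}$ satisfies \eqref{eq:classPCqpert} and is a class~$\mathcal{PC}_q$ perturbation of $T$.

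The main obstacle I anticipate is the closedness of $BA^{-1}$, or rather getting the statement to talk about $\overline{B}$ rather than $B$ itself: $B$ is only assumed closable, not closed, so $BA^{-1}$ need not be closed and one genuinely works with $\overline{B}$. One must be a little careful that the limiting argument above stays inside $\mathrm{D}(\overline{B})$ and not just $\mathrm{D}(B)$; this is exactly where closability (as opposed to mere density of domain) is used, since it guarantees $\overline{B}$ is a well-defined single-valued operator and that $u_n\to u$, $Bu_n\to g$ forces $\overline{B}u=g$. A secondary point worth stating explicitly is that $A^{-1}$ being bounded with full range is precisely the content of $0\notin\operatorname{Spec}(A)$ for the closed operator $A$; no extra regularity is needed. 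Everything else is routine bookkeeping with Schatten norms and the ideal property $\|CX\|_q\leq\|C\|\,\|X\|_q$.
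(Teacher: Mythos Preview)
Your proposal is correct and follows essentially the same approach as the paper: both arguments use the closed graph theorem to conclude that $\overline{B}A^{-1}$ is bounded (the paper phrases this as ``$\overline{B}$ is $A$-bounded'' and cites \cite[p.191]{1980Kato}), and then apply the ideal inequality $\|\overline{B}\e^{-Tt}\|_q\leq\|\overline{B}A^{-1}\|_\infty\|A\e^{-Tt}\|_q$ to transfer the integrability condition from $A$ to $\overline{B}$. Your version is simply more explicit, spelling out the closedness argument for $\overline{B}A^{-1}$ and verifying the domain inclusion, whereas the paper compresses all of this into two lines.
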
  
\begin{proof}
Let $\overline{B}$ be the closure of $B$. The inclusion of the domains and the close graph theorem ensure that $\overline{B}$ is $A$-bounded \cite[p.191]{1980Kato}. Then 
\[
    \|\overline{B}\e^{-Tt}\|_q\leq \|\overline{B}A^{-1}\|_{\infty}\|A\e^{-Tt}\|_q.
    \]
Hence $\overline{B}$ also satisfies the right hand side of \eqref{eq:classPCqpert}.
\end{proof}

Our major objective after this section will be to apply the framework just introduced to the holomorphic semigroup generated by the non-selfadjoint harmonic oscillator and perturbations by potentials. If $T$ is the generator of a bounded holomorphic semigroup on a sector and $A$ is $T$-bounded with relative bound equal to $0$, then  $T+A+c$ is the generator of a bounded holomorphic semigroup on that sector for some $c>0$, \cite[Corollary~2.5, p.500]{1980Kato}. If $A$ is additionally a class~$\mathcal{PC}_q$ perturbation of the generator $T$ of a Gibbs semigroup, as we shall see next, the small $t$ asymptotic behaviour of the $\mathcal{C}_q$ norm is preserved even at the boundary of the sector.

For $\alpha,\beta\in(0,\frac{\pi}{2}]$, here and elsewhere we write
\[
     \mathcal{S}(-\alpha,\beta)=\{r\e^{\ci\omega}\,:\, r>0,\,
     \omega\in(-\alpha,\beta)\}.
     \]
Let $T$ be an m-sectorial operator. Then, $\e^{-T\tau}$ is a bounded holomorphic semigroup  for all $\tau\in \mathcal{S}(-\alpha,\beta)$ with suitable $\alpha$ and $\beta$. If $\e^{-Tt}\in \mathcal{C}_1$ for all $t>0$, then also $\e^{-T\tau}\in \mathcal{C}_1$ for all $\tau\in \mathcal{S}(-\alpha,\beta)$ and $\tau\mapsto \e^{-T\tau}$ is  holomorphic in $\mathcal{S}(-\alpha,\beta)$ with respect to $\|\cdot\|_1$. For $\theta =-\alpha$ or $\theta=\beta$, the $C_0$-semigroup $\e^{-\e^{\ci \theta}Tt}$ might or might not be compact. It is not compact for example, whenever $T=T^*>0$ and $\alpha=\beta=\frac{\pi}{2}$. But, as we shall see in the next section, some times $\e^{-T\tau}\in \mathcal{C}_1$ for all $\tau\in \overline{\mathcal{S}(-\alpha,\beta)}\setminus\{0\}$, the maximal sector of analyticity. By applying Corollary~\ref{Co:equalasymptotics} to rotations of the operators involved, it is straightforward that class~$\mathcal{PC}_q$ perturbations preserve this characteristic.

\begin{Th} \label{Th:Gibbsinsector}
Let $T$ be the generator of a semigroup $\e^{-T\tau} \in\mathcal{C}_1$ for all $\tau\in \overline{\mathcal{S}(-\alpha,\beta)}\setminus \{0\}$ holomorphic in $\mathcal{S}(-\alpha,\beta)$. If $A$ is a class 
$\mathcal{PC}_q$ perturbation of $T$ for $q<\infty$, then $T+A$ is also the generator of a semigroup $\e^{-(T+A)\tau} \in\mathcal{C}_1$ for all $\tau\in \overline{\mathcal{S}(-\alpha,\beta)}\setminus\{0\}$ holomorphic in  $\mathcal{S}(-\alpha,\beta)$. Moreover, for all $-\alpha \leq \theta \leq \beta$,
\[
\|\e^{-(T+A)\e^{\ci \theta} r}\|_q\sim\|\e^{-T\e^{\ci \theta} r}\|_q \qquad r \to 0.
        \]        
\end{Th}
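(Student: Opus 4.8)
The plan is to reduce Theorem~\ref{Th:Gibbsinsector} to the already-established Lemma~\ref{Th:classPCpperturbation} and Corollary~\ref{Co:equalasymptotics} by means of a rotation trick, exactly as hinted in the paragraph preceding the statement. Fix an angle $\theta$ with $-\alpha\leq\theta\leq\beta$. Introduce the rotated operator $T_\theta=\e^{\ci\theta}T$, whose associated $C_0$-semigroup is $\e^{-T_\theta t}=\e^{-T\e^{\ci\theta}t}$ for $t>0$; by hypothesis this lies in $\mathcal{C}_1$ for every $t>0$, so $T_\theta$ is again the generator of a Gibbs semigroup. First I would verify that $A$ is a class~$\mathcal{PC}_q$ perturbation of $T_\theta$. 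The domain inclusion is immediate since $\bigcup_{t>0}\e^{-T_\theta t}(\mathcal{H})=\bigcup_{t>0}\e^{-T\e^{\ci\theta}t}(\mathcal{H})\subset\operatorname{D}(A)$ (using that the range along the ray sits inside $\operatorname{D}(T)=\operatorname{D}(A)\cap\ldots$, more precisely that $A$ is $T$-bounded and the holomorphic semigroup maps into $\operatorname{D}(T)$). For the integrability condition one writes $\|A\e^{-T_\theta t}\|_q=\|A\e^{-T\e^{\ci\theta}t}\|_q$ and must show $\int_0^1\|A\e^{-T\e^{\ci\theta}t}\|_q\,\mathrm{d}t<\infty$.

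The key step is therefore to transfer the known bound $\int_0^1\|A\e^{-Tt}\|_q\,\mathrm{d}t<\infty$ (i.e. along the real ray $\theta=0$) to the slanted ray. Here I would exploit the semigroup/holomorphy structure: for $t$ in a fixed interval, factor $\e^{-T\e^{\ci\theta}t}=\e^{-T\e^{\ci\theta}(t-\delta)}\e^{-T\e^{\ci\theta}\delta}$ with a small $\delta>0$, or better, relate the slanted ray to the real one through a Cauchy-integral / resolvent representation. Actually the cleanest route is the converse direction: run Lemma~\ref{Th:classPCpperturbation} directly with generator $T_\theta$ — but that lemma requires the integrability hypothesis, which is precisely what we must check. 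So the genuine content is showing $\int_0^1\|A\e^{-T\e^{\ci\theta}t}\|_q\,\mathrm{d}t<\infty$ for \emph{all} admissible $\theta$, not just $\theta=0$. The natural argument: since $\tau\mapsto\e^{-T\tau}$ is holomorphic in $\mathcal{S}(-\alpha,\beta)$ and $\mathcal{C}_1$-valued, and since $A$ is $T$-bounded with relative bound $0$, one gets $\|A\e^{-T\tau}\|_q\leq\|A\e^{-T\tau_0}\|_\infty\|\e^{-T(\tau-\tau_0)}\|_q$ for $\tau_0$ on a nearby interior ray; then control $\|A\e^{-T\tau_0}\|_\infty$ for $\tau_0$ bounded away from $0$ by boundedness of the holomorphic semigroup, and control the $\mathcal{C}_q$-growth of $\|\e^{-T(\tau-\tau_0)}\|_q$ as $\tau-\tau_0\to 0$ by comparing with the real ray — using that on a fixed compact arc of angles the small-$\tau$ asymptotics of $\|\e^{-T\tau}\|_q$ are uniform (this is where holomorphy in the \emph{open} sector, plus continuity up to the closed sector minus origin, is used). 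I would spell out that $\|\e^{-T\e^{\ci\theta}t}\|_q$ is, up to constants uniform in $\theta\in[-\alpha,\beta]$, comparable to $\|\e^{-Tt}\|_q$ for $t\in(0,1]$; given $\int_0^1\|A\e^{-Tt}\|_q\,\mathrm{d}t<\infty$, splitting $\e^{-T\e^{\ci\theta}t}$ as above and integrating yields the finiteness of $\int_0^1\|A\e^{-T\e^{\ci\theta}t}\|_q\,\mathrm{d}t$.

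Once $A$ is a class~$\mathcal{PC}_q$ perturbation of $T_\theta$, Lemma~\ref{Th:classPCpperturbation} applied to $T_\theta$ gives that $T_\theta+A=\e^{\ci\theta}(T+\e^{-\ci\theta}A)$ generates a Gibbs semigroup; but $\e^{-\ci\theta}A$ is still a class~$\mathcal{PC}_q$ perturbation of $T$ (it differs from $A$ by a unimodular scalar, so the integrability and domain conditions are unchanged), and by the analyticity half of \cite[Corollary~2.5, p.500]{1980Kato} applied on the open sector — together with the fact that the family $\{\e^{-(T+A)\tau}\}$ obtained for the various $\theta$ is consistent via the Dyson--Phillips expansion — one concludes $\e^{-(T+A)\tau}\in\mathcal{C}_1$ for all $\tau\in\overline{\mathcal{S}(-\alpha,\beta)}\setminus\{0\}$ with $\tau\mapsto\e^{-(T+A)\tau}$ holomorphic in the open sector. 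Finally, the asymptotic equivalence is immediate: apply Corollary~\ref{Co:equalasymptotics} to the pair $(T_\theta,A)$ to get $\|\e^{-(T_\theta+A)t}\|_q\sim\|\e^{-T_\theta t}\|_q$ as $t\to0^+$, which is exactly $\|\e^{-(T+A)\e^{\ci\theta}t}\|_q\sim\|\e^{-T\e^{\ci\theta}t}\|_q$.

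I expect the main obstacle to be the uniformity-in-$\theta$ claim: one must be sure that the estimate $\int_0^1\|A\e^{-T\e^{\ci\theta}t}\|_q\,\mathrm{d}t<\infty$ holds including at the boundary angles $\theta=-\alpha$ and $\theta=\beta$, where the semigroup is only assumed to be $\mathcal{C}_1$-valued and strongly continuous, not necessarily holomorphic. The factorization $\e^{-T\e^{\ci\theta}t}=\e^{-T\e^{\ci\theta'}\delta}\e^{-T(\e^{\ci\theta}t-\e^{\ci\theta'}\delta)}$ with $\theta'$ an interior angle handles this: the first factor is a fixed $\mathcal{C}_q$ operator (so $\|A\,\cdot\|_\infty$-bounded, $A$ being $T$-bounded), and the remaining argument lies in the open sector where holomorphy and uniform small-parameter asymptotics are available, provided one checks that $\e^{\ci\theta}t-\e^{\ci\theta'}\delta$ stays in $\mathcal{S}(-\alpha,\beta)$ for suitable choices — a routine but slightly delicate geometric verification. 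With that in hand, everything else is bookkeeping around the rotation.
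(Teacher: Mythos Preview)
The paper supplies no proof for this theorem beyond the sentence preceding it: ``By applying Corollary~\ref{Co:equalasymptotics} to rotations of the operators involved, it is straightforward that class~$\mathcal{PC}_q$ perturbations preserve this characteristic.'' Your plan---rotate, then invoke Lemma~\ref{Th:classPCpperturbation} and Corollary~\ref{Co:equalasymptotics} for each $T_\theta=\e^{\ci\theta}T$---is exactly what the paper indicates, so at the level of strategy the two coincide.

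You are right, though, that the rotation reduces the problem to checking that $A$ (equivalently $\e^{\ci\theta}A$) is a class~$\mathcal{PC}_q$ perturbation of $\e^{\ci\theta}T$, i.e.\ that $\int_0^1\|A\e^{-T\e^{\ci\theta}t}\|_q\,\mathrm{d}t<\infty$ for every $\theta\in[-\alpha,\beta]$, whereas the literal hypothesis only asserts this at $\theta=0$. For interior angles the convex-decomposition idea works: write $\e^{\ci\theta}=\lambda+\mu\e^{\ci\phi}$ with $\lambda>0$ and $\phi$ still in the closed sector, factor $\e^{-T\e^{\ci\theta}t}=\e^{-T\lambda t}\e^{-T\mu\e^{\ci\phi}t}$, and bound $\|A\e^{-T\e^{\ci\theta}t}\|_q\leq \|A\e^{-T\lambda t}\|_q\,\|\e^{-T\mu\e^{\ci\phi}t}\|_\infty$.

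The genuine gap is your boundary argument. The factorisation you propose there fails geometrically: if $\theta=\beta$ is the boundary angle and $\theta'\in[-\alpha,\beta)$ is any other admissible angle, then for every $t,\delta>0$ the point $\e^{\ci\beta}t-\e^{\ci\theta'}\delta$ has argument strictly larger than $\beta$ (subtracting a vector of smaller argument from a point on the boundary ray pushes you \emph{out} of the sector, not into its interior). So the ``remaining argument'' is not in $\overline{\mathcal{S}(-\alpha,\beta)}$ and the semigroup is undefined there; your ``routine but slightly delicate geometric verification'' cannot succeed as stated.

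It is worth noting that in the paper's only application (Theorem~\ref{Th:theopert}) the $\mathcal{PC}_r$ condition is established \emph{directly and separately} for every ray $\e^{\ci\omega}H_\vartheta$, including the boundary rays $|\omega|=\frac{\pi}{2}-|\vartheta|$; nothing is transferred from the real ray. This strongly suggests the intended reading of Theorem~\ref{Th:Gibbsinsector} is that the $\mathcal{PC}_q$ hypothesis has already been checked along each ray, after which the conclusion really is immediate from Corollary~\ref{Co:equalasymptotics} applied raywise. Under that reading your proposal is complete; under the literal reading, neither you nor the paper supplies the boundary step.
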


See also Theorem~\ref{Th:theopert} below.

\section{Asymptotic behaviour of the non-selfadjoint Mehler kernel}
\label{sec2}
The numerical range of $H_\vartheta$ is
\[
    \operatorname{Num}(H_{\vartheta})=\left\{\e^{-\ci\vartheta}s+\e^{\ci\vartheta}t\,:\, s,\,t\in \mathbb{R},\,st\geq \frac14     \right\}\subset \mathcal{S}(-|\vartheta|,|\vartheta|),
\]
\cite[pro. 2.1]{2001Boulton}. Then $H_{\vartheta}$ is m-sectorial and the generator of a bounded holomorphic semigroup $\e^{-H_{\vartheta}\tau}$ for all 
\[
\tau\in \mathcal{S}_{\vartheta}\equiv \mathcal{S}\left(-\frac{\pi}{2}+|\vartheta|,\frac{\pi}{2}-|\vartheta| \right).
\]
Moreover $\e^{\ci (\pm\frac{\pi}{2}\mp|\vartheta|)}H_{\vartheta}$ are generators of $C_0$-semigroups for all $\vartheta\in(-\frac{\pi}{2},\frac{\pi}{2})$. Whenever $\vartheta\not=0$, $\e^{-H_{\vartheta}\tau}$ is continuous in $\|\cdot\|_{\infty}$ for all 
$
\tau\in \overline{\mathcal{S}_{\vartheta}}\setminus\{0\}.
$
 This is not the case for $\vartheta=0$ and $\tau$ approaching the boundary of the segment $\mathcal{S}(-\frac{\pi}{2},\frac{\pi}{2})$, because $\e^{\pm\ci H_0 t}$ are unitary groups for $t\in\mathbb{R}$. 

According to the framework of \cite{2017AlemanViola,2016Viola}, when seen as a family of bounded operators in $\tau$, the holomorphic semigroup $\e^{-H_\theta \tau}$ has a bounded extension (in the uniform operator norm) to the maximal semi-modulus
\[
     \mathcal{T}_{\vartheta}=\left\{\tau\in \mathbb{C}:\Re \tau>0,\,|\arg \tanh(\tau)|<\frac{\pi}{2}-|\vartheta|\right\}\supset \mathcal{S}_{\vartheta}.
\]
This extension is analytic and compact for all $\tau\in \mathcal{T}_{\vartheta}$, and it is bounded for all $\tau\in \overline{\mathcal{T}_{\vartheta}}$. The operator $H_\vartheta$ which has Weyl symbol $q_{\vartheta}(x,\zeta)=\e^{-\vartheta}\zeta^2+\e^{\vartheta}x^2$, corresponds to that presented in \cite[Example~2.1]{2016Viola}. 

We now determine various asymptotic properties of $\e^{-H_{\vartheta}\tau}$ in parts of this maximal region. Let
\begin{align*} 
  \lambda &\equiv \lambda(\tau)=\e^{-2 \tau} \\
  w_1 & \equiv w_1(\vartheta,\tau)=\e^{\ci \vartheta}
  \left[\frac{ \lambda(\tau)}{1-\lambda^2(\tau)}\right]=\frac{\e^{\ci \vartheta}}{2}\operatorname{csch}(2\tau)  \\
  w_2& \equiv w_2(\vartheta, \tau)= 
  \frac{\e^{\ci\vartheta}}{2}\left[\frac{1+\lambda^2(\tau)}{1-\lambda^2(\tau)}\right] =\frac{\e^{\ci\vartheta}}{2}\coth(2\tau).
\end{align*}
and
\[
      M_{\vartheta}(\tau,x,y)=\left(\frac{w_1}{\pi}\right)^{1/2}\exp\left[2w_1xy-w_2(x^2+y^2)\right].
\]
The classical Mehler's formula extends to non-real $\tau$ \cite[Theorem~4.2]{2001Boulton}, 
\begin{equation*} \label{eq:Mehlerinsector}
     \e^{-H_{\vartheta}\tau}f(x)=\int_{-\infty}^\infty
     M_{\vartheta}(\tau,x,y)f(y) \mathrm{d}y \qquad \forall \tau\in \mathcal{S}_{\vartheta}.
\end{equation*}
Let $r_j\equiv r_j(\vartheta,\tau)=\Re \left[w_j(\vartheta,\tau)\right]$. 
In the next statement, note that
\begin{equation}   \label{eq:conditionsangles}
\begin{gathered}
   |\omega|\leq\frac{\pi}{2}-|\vartheta| \qquad \Rightarrow \qquad |\cos\theta| \geq |\sin \omega| \\
 \qquad \text{and} \qquad   |\omega|=\frac{\pi}{2}-|\vartheta|  \qquad \iff \qquad  |\cos\theta| = |\sin \omega|.
\end{gathered}
\end{equation}

\begin{Le}   \label{signandasymptoticsofrealpartswj}
The conditions
\begin{equation} \label{eq:forSchattennorm}
     r_2(\vartheta,\tau)>0\quad \text{and} \quad r_2(\vartheta,\tau)\pm r_1(\vartheta,\tau)>0
\end{equation}
hold, if and only if $\tau\in \mathcal{T}_{\vartheta}$.
Moreover, as $t\to 0^+$,
\begin{align*}
     |w_1(\vartheta,\e^{\ci\omega}t)|&= \frac14 t^{-1}+O(1), \\
     r_2(\vartheta,\e^{\ci\omega}t)&=\begin{cases} 
\frac{\cos(\omega+\theta)}{2}t^{-1}+O(1) &  |\omega|<\frac{\pi}{2}-|\vartheta|   \\
 \frac{\sin(4\theta)}{3}t+O(t^2)    & |\omega|=\frac{\pi}{2}-|\vartheta|
     \end{cases}
\end{align*}
and
\[
    \frac{1}{r_2(\vartheta,\e^{\ci\omega}t)^2-r_1(\vartheta,\e^{\ci\omega}t)^2}=\begin{cases} 
 \frac{1}{\cos^2\vartheta-\sin^2 \omega}+O(t^2) &  |\omega|<\frac{\pi}{2}-|\vartheta|   \\
 \frac{3}{\sin^2(2\vartheta)}t^{-2}+O(1)    & |\omega|=\frac{\pi}{2}-|\vartheta|,
     \end{cases}
\]
for fixed $\vartheta \in\left(-\frac{\pi}{2} ,\frac{\pi}{2}\right)$ and $\omega\not =\pm\frac{\pi}{2}$.
\end{Le}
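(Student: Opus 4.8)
The plan is to verify everything by direct computation from the explicit closed forms $w_1=\tfrac{\e^{\ci\vartheta}}{2}\operatorname{csch}(2\tau)$ and $w_2=\tfrac{\e^{\ci\vartheta}}{2}\coth(2\tau)$, organised in three stages: first the sign conditions characterising $\mathcal{T}_\vartheta$, then the small-$t$ asymptotics off the boundary, then the delicate boundary case $|\omega|=\tfrac{\pi}{2}-|\vartheta|$.

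For the characterisation, I would compute $r_2\pm r_1=\Re\bigl[\tfrac{\e^{\ci\vartheta}}{2}(\coth(2\tau)\pm\operatorname{csch}(2\tau))\bigr]$ and use the half-angle identities $\coth(2\tau)+\operatorname{csch}(2\tau)=\coth(\tau)$ and $\coth(2\tau)-\operatorname{csch}(2\tau)=\tanh(\tau)$. Thus the pair of conditions $r_2\pm r_1>0$ becomes $\Re[\e^{\ci\vartheta}\coth\tau]>0$ and $\Re[\e^{\ci\vartheta}\tanh\tau]>0$. Writing $\tanh\tau=\rho\e^{\ci\psi}$, the second condition is $\cos(\psi+\vartheta)>0$, i.e. $|\psi+\vartheta|<\tfrac{\pi}{2}$; since $\coth\tau=\rho^{-1}\e^{-\ci\psi}$, the first is $\cos(\psi-\vartheta)>0$, i.e. $|\psi-\vartheta|<\tfrac{\pi}{2}$. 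Together these say $|\psi|<\tfrac{\pi}{2}-|\vartheta|$, which is exactly $|\arg\tanh\tau|<\tfrac{\pi}{2}-|\vartheta|$; and one checks $\Re\tau>0$ forces $\Re\tanh\tau>0$ automatically, so the condition $r_2>0$ (which is implied by $r_2\pm r_1>0$) adds nothing. This gives $\tau\in\mathcal{T}_\vartheta$. I should double-check that $r_2>0$ alone is genuinely redundant given the other two, but $r_2=\tfrac12(r_2+r_1)+\tfrac12(r_2-r_1)$ makes that immediate.

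For the asymptotics with $\tau=\e^{\ci\omega}t$ and $|\omega|<\tfrac{\pi}{2}-|\vartheta|$, I would expand as $t\to0^+$: $\operatorname{csch}(2\tau)=\tfrac{1}{2\tau}-\tfrac{\tau}{3}+O(\tau^3)$ and $\coth(2\tau)=\tfrac{1}{2\tau}+\tfrac{2\tau}{3}+O(\tau^3)$. Then $w_1=\tfrac{\e^{\ci\vartheta}}{4\tau}+O(t)$, so $|w_1|=\tfrac{1}{4t}+O(t)$; and $w_2=\tfrac{\e^{\ci\vartheta}}{4\tau}+O(t)=\tfrac{\e^{\ci(\vartheta-\omega)}}{4t}+O(t)$, whence $r_2=\tfrac{\cos(\vartheta-\omega)}{4t}+O(t)$. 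Here I need the paper's sign convention: in the statement the leading term is written $\tfrac{\cos(\omega+\theta)}{2}t^{-1}$, so there is evidently a relation between the symbol $\theta$ appearing in the lemma and $\vartheta$ (presumably $\theta=-\vartheta$, matching the Weyl-symbol normalisation $q_\vartheta$ used just above), together with the doubling $w_2=\tfrac{\e^{\ci\vartheta}}{2}\coth(2\tau)$ contributing a factor; I would reconcile the constant $\tfrac14$ versus $\tfrac12$ by tracking that $\coth(2\tau)\sim\tfrac{1}{2\tau}$ so $\tfrac{\e^{\ci\vartheta}}{2}\coth(2\tau)\sim\tfrac{\e^{\ci\vartheta}}{4\tau}$, while the stated leading coefficient $\tfrac12\cos(\omega+\theta)$ suggests the lemma actually records $2r_2$ or a rescaled quantity — I will make the bookkeeping explicit at the start of the proof so the constants are unambiguous. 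For $r_2^2-r_1^2$, note $r_2^2-r_1^2=(r_2-r_1)(r_2+r_1)$, and from above $r_2\pm r_1=\Re[\tfrac{\e^{\ci\vartheta}}{2}\tanh\tau]$ resp. $\Re[\tfrac{\e^{\ci\vartheta}}{2}\coth\tau]$; with $\tanh\tau=\tau+O(\tau^3)$ and $\coth\tau=\tfrac1\tau+O(\tau)$ this gives $r_2-r_1=\tfrac{t\cos(\vartheta+\omega)}{2}+O(t^3)$ and $r_2+r_1=\tfrac{\cos(\vartheta-\omega)}{2t}+O(t)$, so the product is $\tfrac{\cos(\vartheta+\omega)\cos(\vartheta-\omega)}{4}+O(t^2)=\tfrac{\cos^2\vartheta-\sin^2\omega}{4}+O(t^2)$, matching the claimed $\tfrac{1}{\cos^2\vartheta-\sin^2\omega}+O(t^2)$ for the reciprocal up to the normalising constant I must pin down.

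The boundary case $|\omega|=\tfrac{\pi}{2}-|\vartheta|$ is where the cancellation happens and is the main obstacle. Here $\cos(\vartheta+\omega)=0$ (taking signs so that $\vartheta+\omega=\pm\tfrac\pi2$), so the leading $O(t^{-1})$ term of $r_2$ vanishes and I must expand one order further. Using $\tanh\tau=\tau-\tfrac{\tau^3}{3}+O(\tau^5)$, I get $r_2-r_1=\Re[\tfrac{\e^{\ci\vartheta}}{2}(\tau-\tfrac{\tau^3}{3})]+O(t^5)$; the first term contributes $\tfrac{t}{2}\cos(\vartheta+\omega)=0$, so the surviving term is $-\tfrac{t^3}{6}\Re[\e^{\ci(\vartheta+3\omega)}]+O(t^5)=-\tfrac{t^3}{6}\cos(\vartheta+3\omega)+O(t^5)$, and with $\vartheta+\omega=\pm\tfrac\pi2$ one has $\vartheta+3\omega=\pm\tfrac\pi2+2\omega$, so $\cos(\vartheta+3\omega)=\mp\sin(2\omega)$ — this is what produces the $t^3$-scale and the $\sin$ of a doubled angle that ultimately gives the stated $\tfrac{\sin(4\theta)}{3}t$ (after combining with $r_2+r_1=\tfrac{\cos(\vartheta-\omega)}{2t}+O(t)$, which remains nonzero since $\vartheta-\omega=2\vartheta\mp\tfrac\pi2$, to leading order $\mp\tfrac12\sin(2\vartheta)\,t^{-1}$). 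Then $r_2=\tfrac12[(r_2+r_1)+(r_2-r_1)]$ has its $t^{-1}$ and $t$ parts, but the $t^{-1}$ part is $\mp\tfrac14\sin(2\vartheta)t^{-1}\ne0$ — so I must re-examine: the stated $r_2\sim\tfrac{\sin(4\theta)}{3}t$ means the $t^{-1}$ terms of $(r_2+r_1)$ and $(r_2-r_1)$ must themselves cancel, which forces me to be careful that it is $r_1$ that carries the $t^{-1}$ singularity with opposite sign, i.e. $r_1=\tfrac12[(r_2+r_1)-(r_2-r_1)]\sim\mp\tfrac14\sin(2\vartheta)t^{-1}$ while $r_2$ is genuinely $O(t)$. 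Tracking this requires computing $r_2$ directly from $\coth(2\tau)$ rather than via the $r_2\pm r_1$ combination: $r_2=\Re[\tfrac{\e^{\ci\vartheta}}{2}\coth(2\tau)]$ and I expand $\coth(2\tau)=\tfrac{1}{2\tau}+\tfrac{2\tau}{3}+O(\tau^3)$, so $r_2=\Re[\tfrac{\e^{\ci(\vartheta-\omega)}}{4t}]+\tfrac{t}{3}\Re[\e^{\ci(\vartheta+\omega)}]+O(t^3)$; the first term is $\tfrac{\cos(\vartheta-\omega)}{4t}$ which on the boundary is $\mp\tfrac{\sin2\vartheta}{4t}$ — nonzero. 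This contradicts the stated asymptotic unless the lemma's $r_2$ really is $r_2$ in a \emph{rotated} frame, namely $\Re[\e^{-\ci\omega}w_2]$ or equivalently the real part relevant to the rotated generator $\e^{\ci\omega}H_\vartheta$; indeed for the Gaussian kernel to be in a Schatten class on the ray $\arg\tau=\omega$ one needs the real part of $w_2$ \emph{after} absorbing the rotation, and that is $\Re[\e^{-\ci\omega}\cdot\tfrac{\e^{\ci\vartheta}}{2}\coth(2\tau)]=\tfrac12\Re[\e^{\ci(\vartheta-\omega)}\coth(2\e^{\ci\omega}t)]$, whose leading $\tfrac{1}{4t}$ coefficient is $\Re[\e^{\ci(\vartheta-2\omega)}]/2\cdot$(something) — again I must fix the precise quantity from how $r_j$ feeds into the later Schatten-norm estimates. \textbf{The crux of the proof, then, is to state at the outset exactly which real part $r_j$ denotes relative to the ray direction $\omega$, after which the boundary asymptotics follow from the third-order Taylor expansion of $\coth$ and $\operatorname{csch}$ at $0$ together with the vanishing $\cos(\vartheta\pm\omega)=0$ or $\cos(\vartheta\mp\omega)=0$ identity from \eqref{eq:conditionsangles}; the off-boundary cases are then the generic first-order truncation of the same expansions, and the ``if and only if'' is the half-angle reduction to $|\arg\tanh\tau|<\tfrac\pi2-|\vartheta|$ described above.}
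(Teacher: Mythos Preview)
Your overall strategy matches the paper's. For the characterisation the paper rewrites $w_2\pm w_1=\e^{\ci\vartheta}\tfrac{1\pm\lambda}{1\mp\lambda}$ with $\lambda=\e^{-2\tau}$ and $\tanh\tau=\tfrac{1-\lambda}{1+\lambda}$, which is precisely your half-angle reduction $w_2+w_1=\tfrac{\e^{\ci\vartheta}}{2}\coth\tau$, $w_2-w_1=\tfrac{\e^{\ci\vartheta}}{2}\tanh\tau$; the paper then derives $r_2>0$ separately via a convexity argument ($\tau\in\mathcal T_\vartheta\Rightarrow 2\tau\in\mathcal T_\vartheta$ together with $w_2(\vartheta,\tau)=w_2(\vartheta,2\tau)+w_1(\vartheta,2\tau)$), whereas your observation $r_2=\tfrac12[(r_2+r_1)+(r_2-r_1)]$ is quicker. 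For the asymptotics the paper sets $a=2\cos\omega$, $b=2\sin\omega$ and writes $r_2$, $r_2\pm r_1$ as explicit real quotients in $\sinh,\sin,\cosh,\cos$ of $2at,2bt$ (respectively $at,bt$), then takes limits; your complex Taylor expansion is the same computation in different dress.

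The gap is your ``rotated frame'' speculation: abandon it. The paper defines $r_j=\Re w_j(\vartheta,\tau)$ immediately before the lemma and never alters this. The discrepancies you found are slips in the paper itself---$\theta$ is written where $\vartheta$ is meant, the displayed formulas for $r_2$ and $r_2\pm r_1$ in the proof drop the factor $\tfrac12$ coming from $w_j=\tfrac{\e^{\ci\vartheta}}{2}(\cdots)$, and the linear coefficient of the numerator of $r_2$ is recorded as ``$2\cos(\omega+\theta)$'' when it is in fact $4\cos(\omega-\vartheta)$. Your value $r_2\sim\tfrac{\cos(\vartheta-\omega)}{4t}$ is correct, and you are also right that it does \emph{not} vanish on every boundary ray (for $\vartheta>0$ and $\omega=\tfrac\pi2-\vartheta$ one has $\cos(\vartheta-\omega)=\sin 2\vartheta\ne0$); the stated $r_2=O(t)$ boundary line only holds on the ray where $\omega-\vartheta=\pm\tfrac\pi2$. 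What \emph{is} uniform across both boundary rays---and what Lemma~\ref{asymptoticssemigroup} and the later Schatten estimates actually use---is the third asymptotic for $(r_2^2-r_1^2)^{-1}$: on each ray exactly one of $r_2\pm r_1$ loses its leading term, and expanding $\tanh\tau=\tau-\tfrac{\tau^3}{3}+\cdots$ on one ray and $\coth\tau=\tfrac1\tau+\tfrac\tau3+\cdots$ on the other yields $(r_2+r_1)(r_2-r_1)\sim\tfrac{\sin^2(2\vartheta)}{12}\,t^2$ either way. So treat the two boundary rays separately, record the correct constants, and do not search for a hidden reinterpretation of $r_j$.
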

\begin{proof}
For the first part of the lemma we show that
\begin{equation} \label{eq:optimalregionandcoeffMehler1}
  |\arg(w_2\pm w_1)|<\frac{\pi}{2}\quad \iff \quad 
|\arg \tanh \tau|<\frac{\pi}{2}-|\vartheta|
\end{equation}
and that
\begin{equation} \label{eq:optimalregionandcoeffMehler2}
 |\arg \tanh \tau|<\frac{\pi}{2}-|\theta|\quad   \Longrightarrow \quad  
|\arg w_2 |<\frac{\pi}{2}.
\end{equation}
Since
\[
     \tanh \tau=\frac{1-\lambda}{1+\lambda} \quad \text{and} \quad
     w_2\pm w_1=\e^{\ci \vartheta}\frac{1\pm\lambda}{1\mp\lambda},
\]
then
\[
    \arg (w_2\pm w_1)=\vartheta\pm \arg(1+\lambda)\mp \arg(1-\lambda)=
    \vartheta\mp \arg \tanh \tau
\]
and hence \eqref{eq:optimalregionandcoeffMehler1}. Suppose that the left hand side of \eqref{eq:optimalregionandcoeffMehler2} holds true. That is $\tanh \tau\in \mathcal{S}_{\vartheta}$. Then also $\coth \tau \in \mathcal{S}_{\vartheta}$. 
By convexity of the sector, also
\[
      \tanh(2\tau)=\frac{2}{\coth \tau+\tanh \tau}\in
      \mathcal{S}_{\vartheta}. 
\]
Thus, if $\tau\in \mathcal{T}_{\theta}$, also $2\tau \in \mathcal{T}_{\theta}$. Since 
\[
    w_2(\theta,\tau)=w_2(\theta,2\tau)+w_1(\theta,2\tau),
\]
by the equivalence in \eqref{eq:optimalregionandcoeffMehler1} we get that also \eqref{eq:optimalregionandcoeffMehler2} holds true. This completes the first part of the lemma.

In the second part, the proof of the first asymptotic formula is straightforward. For the second and third formulas, let $a=2\cos \omega$ and $b=2\sin \omega$. 
Then
\[
    r_2=\frac{\cos \vartheta \sinh 2at + \sin \vartheta \sin 2bt}{\cosh 2at - \cos 2bt}
\]
and
\[
r_2\pm r_1=\frac{\cos \vartheta \sinh at \pm \sin \vartheta \sin bt}{\cosh at \mp \cos bt}.
\]
In the following, take into account \eqref{eq:conditionsangles}. For the second asymptotic formula, we have 
\[
  \lim_{t\to 0^+}\frac{\cosh 2at - \cos 2bt}{t^2}=4
\]
and two possibilities. If $|\omega|<\frac{\pi}{2}-|\vartheta|$,
\[
    \lim_{t\to 0^+}\frac{\cos \vartheta \sinh 2at + \sin \vartheta \sin 2bt}{t}=
     2a \cos \theta +2b \sin \theta=2\cos(\omega+\theta)>0.
\]
On the other hand, if $|\omega|=\frac{\pi}{2}-|\vartheta|$,
\[
    \lim_{t\to 0^+}\frac{\cos \vartheta \sinh 2at + \sin \vartheta \sin 2bt}{t^3}=
    \frac{2\sin(4\theta)}{4}.
\]
This yields the second asymptotic formula. For the third asymptotic formula, taking similar limits gives the following. 
If $|\omega|<\frac{\pi}{2}-|\vartheta|$,  
\[
    (r_2^2- r_1^2)^{-1}=\frac{4}{a^2\cos^2 \vartheta -b^2 \sin^2 \vartheta}+O(t^2).
\]
If $|\omega|=\frac{\pi}{2}-|\vartheta|$,
\[
   r_2^2-r_1^2=\frac{a^2b^2}{12}t^2+O(t^4).
\]
The remaining details in the proof are straightforward.
\end{proof}

For $x,y\in\mathbb{R}$,
\begin{align*}
\Re[2 w_1xy-w_2(x^2+y^2)]&=2r_1 xy-r_2(x^2+y^2)\\
&=2r_1xy+\frac{r_1^2}{r_2}x^2-\frac{r_1^2}{r_2}x^2-r_2x^2-r_2y^2 \\
&=-r_2\left[\frac{r_1}{r_2}x-y\right]^2-\frac{r_2^2-r_1^2}{r_2}x^2.
\end{align*}
If \eqref{eq:forSchattennorm} holds true, then
\begin{align*}
       \int_{x\in\mathbb{R}}\int_{y\in\mathbb{R}}|M_{\vartheta}(\tau,x,y)|^2\, \mathrm{d}y\,\mathrm{d}x&=\frac{|w_1|}{\pi}
       \int_{x\in\mathbb{R}}\int_{y\in\mathbb{R}} \e^{-2r_2\left[\frac{r_1}{r_2}x-y\right]^2} \e^{-2\frac{r_2^2-r_1^2}{r_2}x^2}\,\mathrm{d}y\,\mathrm{d}x \\
       &=\frac{|w_1|}{2\sqrt{r_2^2-r_1^2}}.
\end{align*}
Hence, by analytic continuation it follows that
\[
     \e^{-H_{\vartheta}\tau}f(x)=\int_{-\infty}^\infty
     M_{\vartheta}(\tau,x,y)f(y) \mathrm{d}y \qquad \forall \tau\in \mathcal{T}_{\vartheta}
\]
and 
\[
\|e^{-H_{\vartheta}\tau}\|_{2}^2= \frac{\pi|w_1|}{2\sqrt{r_2^2-r_1^2}}<\infty \qquad \forall \tau\in \mathcal{T}_{\vartheta}.
\]
This is the extension of Mehler's formula obtained in \cite{2017AlemanViola} for $H_\vartheta$.

The semigroup property 
\[
   \e^{-H_{\vartheta}(\tau+\sigma)}=\e^{-H_{\vartheta}\tau}\e^{-H_{\vartheta}\sigma}
\]
is valid for all $\tau,\sigma\in \mathcal{S}_{\vartheta}$.
By analytic continuation this property extends also to $\tau,\sigma\in\mathcal{T}_{\vartheta}$ such that $\tau+\sigma\in \mathcal{T}_{\vartheta}$. Hence
\[
    \e^{-H_{\vartheta}\tau}\in \mathcal{C}_1 \quad \forall
    \tau\in \mathcal{T}_\vartheta.
\]
Since $\mathcal{T}_{\vartheta}$ is open, there exists $\varepsilon>0$ such that $(1\pm\varepsilon)\tau\in \mathcal{T}_{\vartheta}$ for $\tau\in \mathcal{T}_{\vartheta}$. Then, indeed,
\[
    \|\e^{-H_{\vartheta}\tau}\|_1=\|\e^{-H_{\vartheta}(1-\varepsilon)\tau}\e^{-H_{\vartheta}(1+\varepsilon)\tau}\|_1\leq
    \|\e^{-H_{\vartheta}(1-\varepsilon)\tau}\|_2\|\e^{-H_{\vartheta}(1+\varepsilon)\tau}\|_2<\infty.
\] 
Moreover, from the asymptotic formulas  in Lemma~\ref{signandasymptoticsofrealpartswj} and the periodicity of the hyperbolic functions, it follows the next statement. Recall \eqref{eq:conditionsangles}.

\begin{Le} \label{asymptoticssemigroup} 
For all $k\in \mathbb{Z}$, $\vartheta \in\left(-\frac{\pi}{2} ,\frac{\pi}{2}\right)$ and $\omega\not =\pm\frac{\pi}{2}$
fixed,
\[
      \|e^{-H_{\vartheta}(\e^{\ci\omega}t+\ci k \pi)}\|_{2}^2=\begin{cases}
    \frac{\pi}{8(\cos^2 \vartheta-\sin^2 \omega)^{\frac12}}t^{-1}+O(1)   & |\omega|<\frac{\pi}{2}-|\vartheta|  \\
 \frac{\pi \sqrt{3}}{8\sin^2(2\vartheta)}t^{-2}+O(t^{-1}) &   |\omega|=\frac{\pi}{2}-|\vartheta|
      \end{cases}
\]
as $t\to 0^+$.
\end{Le}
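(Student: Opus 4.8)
The plan is to reduce the claim to the explicit Hilbert--Schmidt norm formula
\[
      \|\e^{-H_{\vartheta}\tau}\|_{2}^2= \frac{\pi|w_1(\vartheta,\tau)|}{2\sqrt{r_2(\vartheta,\tau)^2-r_1(\vartheta,\tau)^2}},
\]
which has already been established for all $\tau\in\mathcal{T}_\vartheta$, and then substitute $\tau=\e^{\ci\omega}t+\ci k\pi$ and let $t\to 0^+$. First I would observe that $w_1$ and $w_2$ are built from $\operatorname{csch}(2\tau)$ and $\coth(2\tau)$, which are $\ci\pi$-periodic, so $w_j(\vartheta,\e^{\ci\omega}t+\ci k\pi)=w_j(\vartheta,\e^{\ci\omega}t)$ for every $k\in\mathbb{Z}$; consequently the left-hand side depends on $k$ only trivially and it suffices to treat $k=0$. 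I would also need to check that $\e^{\ci\omega}t+\ci k\pi\in\mathcal{T}_\vartheta$ for $t$ small, which follows from openness of $\mathcal{T}_\vartheta$, the condition $|\omega|\le\frac{\pi}{2}-|\vartheta|$ together with $\omega\ne\pm\frac{\pi}{2}$, and again $\ci\pi$-periodicity of $\tanh$ (so $\tanh(\e^{\ci\omega}t+\ci k\pi)=\tanh(\e^{\ci\omega}t)$, whose argument tends to $\omega$ as $t\to0^+$, hence lies in the required cone).

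Next I would plug the asymptotics from Lemma~\ref{signandasymptoticsofrealpartswj} into the formula above. From there, $|w_1(\vartheta,\e^{\ci\omega}t)|=\tfrac14 t^{-1}+O(1)$ in all cases. For the interior case $|\omega|<\frac{\pi}{2}-|\vartheta|$, the third asymptotic formula gives $(r_2^2-r_1^2)^{-1/2}=(\cos^2\vartheta-\sin^2\omega)^{-1/2}+O(t^2)$, so
\[
     \|\e^{-H_\vartheta(\e^{\ci\omega}t)}\|_2^2
     =\frac{\pi}{2}\cdot\frac{\tfrac14 t^{-1}+O(1)}{(\cos^2\vartheta-\sin^2\omega)^{1/2}+O(t^2)}
     =\frac{\pi}{8(\cos^2\vartheta-\sin^2\omega)^{1/2}}t^{-1}+O(1),
\]
as claimed. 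For the boundary case $|\omega|=\frac{\pi}{2}-|\vartheta|$, the same lemma yields $(r_2^2-r_1^2)^{-1/2}=\sqrt{3}\,|\sin(2\vartheta)|^{-1}t^{-1}+O(t)$ (taking the square root of the $t^{-2}$ expansion, noting $\sin^2(2\vartheta)>0$ since $\vartheta\ne0$ on that boundary), so multiplying by $|w_1|$ produces the $t^{-2}$ leading term $\frac{\pi\sqrt3}{8\sin^2(2\vartheta)}t^{-2}$ with the next term of order $t^{-1}$.

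The only genuinely delicate point is bookkeeping the error terms: in the boundary case I must be careful that $\sqrt{c_0 t^{-2}+O(1)}=\sqrt{c_0}\,t^{-1}+O(t)$ (uniformly for $t$ small, since $c_0>0$), and that the product of $\tfrac14 t^{-1}+O(1)$ with $\sqrt3\,|\sin 2\vartheta|^{-1}t^{-1}+O(t)$ has error $O(t^{-1})$ rather than $O(1)$; likewise in the interior case the error stays $O(1)$ because the denominator is bounded away from $0$. I would also double-check the sign conventions so that $|w_1|$ and the positive square root $\sqrt{r_2^2-r_1^2}$ are correctly matched to the real expressions given in Lemma~\ref{signandasymptoticsofrealpartswj}. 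Everything else is a routine substitution, and no new analytic input beyond the Hilbert--Schmidt identity and Lemma~\ref{signandasymptoticsofrealpartswj} is required.
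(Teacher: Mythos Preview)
Your approach is exactly what the paper does: it states (without further detail) that the lemma follows from the Hilbert--Schmidt identity $\|\e^{-H_\vartheta\tau}\|_2^2=\dfrac{\pi|w_1|}{2\sqrt{r_2^2-r_1^2}}$, the asymptotics in Lemma~\ref{signandasymptoticsofrealpartswj}, and the $\ci\pi$-periodicity of the hyperbolic functions. One arithmetic point worth flagging: your own intermediate computation gives $(r_2^2-r_1^2)^{-1/2}=\sqrt{3}\,|\sin(2\vartheta)|^{-1}t^{-1}+O(t)$, so multiplying by $\tfrac{\pi}{2}|w_1|$ yields leading coefficient $\tfrac{\pi\sqrt{3}}{8|\sin(2\vartheta)|}$ rather than $\tfrac{\pi\sqrt{3}}{8\sin^2(2\vartheta)}$ --- the discrepancy is a typo in the stated constant, not a flaw in your argument.
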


\section{Perturbations of the non-selfadjoint harmonic oscillator}
\label{sec3}
We now consider locally integrable potentials $V:\mathbb{R}\longrightarrow \mathbb{C}$ satisfying \eqref{precisecondpot}.  Below we take the maximal domain
\[
    \mathrm{D}(V)=\{f\in L^2(\mathbb{R}):\int_\mathbb{R} |V(x)|^2|f(x)|^2\mathrm{d}x<\infty\}
\]
and denote with the same letter $V$ the operator of multiplication in that domain. We begin by showing that $V$ is a $\mathcal{PC}_r$ perturbation of $H_{\vartheta}$ for suitable $r>1$.

\begin{Th} \label{Th:theopert}
Let $\vartheta \in\left(-\frac{\pi}{2} ,\frac{\pi}{2}\right)$ and $\omega\not =\pm\frac{\pi}{2}$. If \eqref{precisecondpot} holds true, then $V$ is a class~$\mathcal{PC}_r$ perturbation of $\e^{\ci \omega}H_{\vartheta}$ for all 
\begin{equation} \label{eq:rangeofr}
r>\begin{cases}  \frac{2}{(2-\alpha)} & |\omega|<\frac{\pi}{2}+|\vartheta| \\
\frac{4}{(2-\alpha)} & |\omega|=\frac{\pi}{2}+|\vartheta|.  
\end{cases}
\end{equation}
\end{Th}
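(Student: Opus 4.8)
The plan is to verify the two conditions \eqref{eq:classPCqpert} of Definition~\ref{De:classPCq} for $T=\e^{\ci\omega}H_\vartheta$ and $A=V$, so that $\e^{-Tt}=\e^{-H_\vartheta\tau}$ with $\tau=\e^{\ci\omega}t$. Since $|\omega|\leq\frac{\pi}{2}-|\vartheta|$, the numerical range of $\e^{\ci\omega}H_\vartheta$ lies in the closed right half-plane, so this operator is accretive; hence $\|\e^{-H_\vartheta\tau}\|_\infty\leq1$ for all such $\tau$, and together with $\{\e^{-H_\vartheta\tau}\}\subset\mathcal{C}_1$ from Section~\ref{sec2} (and the semigroup property) the framework of Definition~\ref{De:classPCq} applies. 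The domain inclusion is the easy half: by Lemma~\ref{signandasymptoticsofrealpartswj} the inequalities \eqref{eq:forSchattennorm} hold along the ray $\e^{\ci\omega}t$, so the form $2r_1xy-r_2(x^2+y^2)$ is negative definite and, completing the square, $|M_\vartheta(\tau,x,y)|\leq C|w_1|^{1/2}\e^{-c'x^2}\e^{-r_2(|y|-|r_1|\,|x|/r_2)^2}$ with $c'=(r_2^2-r_1^2)/r_2>0$. Thus $\e^{-H_\vartheta\tau}f$ decays like a Gaussian in $x$ for every $f\in L^2(\mathbb{R})$, hence lies in $\operatorname{D}(V)$, and the semigroup property propagates this to all $t>0$.

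For the integrability of $\|V\e^{-H_\vartheta(\e^{\ci\omega}t)}\|_r$ I would use the factorisation $V\e^{-H_\vartheta\tau}=\big(V\e^{-H_\vartheta\tau/2}\big)\,\e^{-H_\vartheta\tau/2}$ together with Hölder's inequality for Schatten norms,
\[
\|V\e^{-H_\vartheta\tau}\|_r\leq\|V\e^{-H_\vartheta\tau/2}\|_\infty\,\|\e^{-H_\vartheta\tau/2}\|_r ,
\]
and estimate the two factors separately. For the semigroup factor, Lemma~\ref{asymptoticssemigroup} gives $\|\e^{-H_\vartheta\tau/2}\|_2^2\asymp t^{-\kappa}$ with $\kappa=1$ in the first case of \eqref{eq:rangeofr} and $\kappa=2$ in the second; combining this with $\|\e^{-H_\vartheta\tau/2}\|_\infty\leq1$, with $\|\e^{-H_\vartheta\tau/2}\|_1\leq\|\e^{-H_\vartheta\tau/4}\|_2^2\leq Ct^{-\kappa}$, and with the interpolation inequality $\|B\|_r\leq\|B\|_p^{\theta}\|B\|_q^{1-\theta}$ for $\tfrac1r=\tfrac{\theta}{p}+\tfrac{1-\theta}{q}$ (used with $(p,q)=(2,\infty)$ when $r\geq2$ and with $(p,q)=(1,2)$ when $1\leq r\leq2$) yields $\|\e^{-H_\vartheta\tau/2}\|_r\leq Ct^{-\kappa/r}$ for every $r\geq1$.

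For the potential factor I would show $\|V\e^{-H_\vartheta\sigma}\|_\infty\leq C|\sigma|^{-\alpha/2}\big(\log\tfrac1{|\sigma|}\big)^{\alpha/2}$ as $\sigma=\e^{\ci\omega}s\to0$. Split $V=V\chi_{\{|x|\leq R\}}+V\chi_{\{|x|>R\}}$: the truncated part contributes at most $(aR^\alpha+b)\|\e^{-H_\vartheta\sigma}\|_\infty\leq aR^\alpha+b$, while the tail is controlled in Hilbert--Schmidt norm using the Gaussian bound above,
\[
\big\|V\chi_{\{|x|>R\}}\e^{-H_\vartheta\sigma}\big\|_2^2=\frac{|w_1|}{\sqrt{2\pi r_2}}\int_{|x|>R}|V(x)|^2\e^{-2c'x^2}\,\mathrm{d}x .
\]
By Lemma~\ref{signandasymptoticsofrealpartswj}, in \emph{both} regimes of \eqref{eq:rangeofr} the Gaussian scale obeys $c'=(r_2^2-r_1^2)/r_2\asymp s$ as $s\to0^+$, while $|w_1|/\sqrt{r_2}\leq Cs^{-3/2}$; choosing $R\asymp s^{-1/2}\big(\log\tfrac1s\big)^{1/2}$ makes $c'R^2\asymp\log\tfrac1s$, so $\e^{-2c'R^2}\leq s^{2}$ and both $aR^\alpha+b$ and the tail are $\leq Cs^{-\alpha/2}\big(\log\tfrac1s\big)^{\alpha/2}$. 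Combining the two factors gives $\|V\e^{-H_\vartheta(\e^{\ci\omega}t)}\|_r\leq Ct^{-\alpha/2-\kappa/r}\big(\log\tfrac1t\big)^{\alpha/2}$ for $0<t\leq1$, which is integrable over $(0,1)$ precisely when $\tfrac{\alpha}{2}+\tfrac{\kappa}{r}<1$, i.e. when $r>\tfrac{2\kappa}{2-\alpha}$; this is exactly \eqref{eq:rangeofr}, the logarithmic factor being harmless because the inequality on $r$ is strict.

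I expect the main obstacle to be the operator-norm estimate in the edge regime $|\omega|=\frac{\pi}{2}-|\vartheta|$. A bound relying only on holomorphy of $\tau\mapsto\e^{-H_\vartheta\tau}$ on $\mathcal{T}_\vartheta$ (Cauchy estimates for $H_\vartheta\e^{-H_\vartheta\sigma}$) is too weak: the boundary ray $\sigma=\e^{\ci\omega}s$ sits at distance only of order $s^{3}$ from $\partial\mathcal{T}_\vartheta$, which would give $\|V\e^{-H_\vartheta\sigma}\|_\infty$ of order $s^{-3\alpha/2}$ and fail already for $\alpha\geq\frac{2}{3}$. The point is that the true Gaussian decay rate $c'$ of the Mehler kernel is of the much larger order $s$, and it is this precise feature of $M_\vartheta$—encoded in the asymptotics of Lemma~\ref{signandasymptoticsofrealpartswj}—that must be exploited so that the argument goes through up to and including the edge of the sector.
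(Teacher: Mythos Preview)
Your proof is correct and follows a genuinely different route from the paper's. The paper builds a dyadic majorant $\tilde{V}(x)=\sum_{n\ge0}2^{\alpha(n+1)}\chi_{[2^n,2^{n+1}]}(x)$, estimates each block $\|\chi_n\e^{-H_\vartheta\e^{\ci\omega}t}\|_2$ directly from the Mehler kernel, interpolates with the operator norm to reach $\|\chi_n\e^{-H_\vartheta\e^{\ci\omega}t}\|_r$ for $r>2$, sums over $n$, and finally transfers the conclusion from $\tilde{V}$ to a general $V$ satisfying \eqref{precisecondpot} via Lemma~\ref{Th:perofclassPCqpert}. You instead factor in \emph{time}, $V\e^{-H_\vartheta\tau}=(V\e^{-H_\vartheta\tau/2})\e^{-H_\vartheta\tau/2}$, and split in \emph{space}: an $|x|\le R$ cutoff controls $\|V\e^{-H_\vartheta\tau/2}\|_\infty$ by $R^\alpha$, while the Gaussian tail of the Mehler kernel---with rate $c'=(r_2^2-r_1^2)/r_2\asymp s$ in both regimes, the crucial input from Lemma~\ref{signandasymptoticsofrealpartswj} that you correctly identify---disposes of $|x|>R$. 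Both arguments arrive at the same threshold $r>2\kappa/(2-\alpha)$.

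Your approach has two mild advantages: it handles $V$ directly without the detour through a majorant and Lemma~\ref{Th:perofclassPCqpert}, and because you interpolate the semigroup factor between $\mathcal{C}_1$ and $\mathcal{C}_2$ as well as between $\mathcal{C}_2$ and $\mathcal{C}_\infty$, you cover the range $1\le r<2$ that arises in the interior case for $\alpha<1$ (the paper's interpolation step is written only for $r>2$). One point to tighten in your write-up: the tail claim ``$\e^{-2c'R^2}\le s^2$ so the tail is $\le Cs^{-\alpha/2}(\log\tfrac1s)^{\alpha/2}$'' needs the sharp asymptotic $\int_R^\infty x^{2\alpha}\e^{-2c'x^2}\,\mathrm{d}x\sim R^{2\alpha-1}\e^{-2c'R^2}/(4c')$ rather than the cruder splitting $\e^{-c'R^2}\int_{\mathbb{R}}x^{2\alpha}\e^{-c'x^2}\,\mathrm{d}x$, which in the edge case loses a factor $s^{-1/2}$. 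Alternatively, simply take $c'R^2=K\log\tfrac1s$ with $K\ge2$; the crude bound then already suffices, and the near-part estimate $aR^\alpha+b\asymp s^{-\alpha/2}(\log\tfrac1s)^{\alpha/2}$ is unaffected.
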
 
\begin{proof}
In this proof the constants $k_j>0$ are independent of $n$, $\omega$ or $\vartheta$, but might depend on $p,\,r$ or $\alpha$. Assume that $|\omega|=\frac{\pi}{2}-|\vartheta|$. We include full details in this case only as the other one is very similar.

Our first goal is to construct a potential $\tilde{V}$ with the same growth as $V$ such that, for some $\varepsilon>0$,
\begin{equation} \label{eq:prooftheoper4}
   \left\|\tilde{V}\e^{-H_{\vartheta}\e^{\ci \omega}t}\right\|_{r}=O( t^{-1+\varepsilon}) \qquad \text{as }t\to 0^+.
\end{equation}
Let $n\in \mathbb{N}$. Let
\[
    \chi_n(x)=\chi_{[2^n,2^{n+1}]}(x).
\]
Then
\begin{equation}  \label{eq:prooftheoper0}
\begin{aligned}
   \left\|\chi_n\e^{-H_{\vartheta}\e^{\ci \omega}t}\right\|_2^2&=\frac{2|w_1|}{\pi}
 \int_{y\in \mathbb{R}}\e^{-2 r_2 y^2}\mathrm{d}y\int_{2^n}^{2^{n+1}}\e^{-\frac{r_2^2-r_1^2}{r_2}x^2}\mathrm{d}x \\
&=\frac{2|w_1|\sqrt{\pi}}{\sqrt{2r_2}}
 \int_{2^n}^{2^{n+1}}\e^{-\frac{r_2^2-r_1^2}{r_2}x^2}\mathrm{d}x.
\end{aligned}
\end{equation}
Let $p>0$. Then there exist $k_1>0$ such that
\[
\left\|\chi_n\e^{-H_{\vartheta}\e^{\ci \omega}t}\right\|_2^2\leq k_1 \frac{|w_1|r_2^{\frac{p}{2}}}{(r_2^2-r_1^2)^{\frac{p+1}{2}}}2^{-np}.
\]
From Lemma~\ref{signandasymptoticsofrealpartswj} it then follows that
 \begin{equation} \label{eq:prooftheoper1}
\left\|\chi_n\e^{-H_{\vartheta}\e^{\ci \omega}t}\right\|_2^2\leq k_2  
2^{-np}
t^{-\frac{p+4}{2}} \qquad \forall t\in(0,1).
\end{equation}
Using the semigroup property, then $\chi_{n}\e^{-H_{\vartheta}t}\in \mathcal{C}_2$ for all $t>0$. Also, note that
\[
    \left\|\chi_n\e^{-H_{\vartheta}\e^{\ci \omega}t}\right\|_{\infty}\leq
      \left\|\e^{-H_{\vartheta}\e^{\ci \omega}t}\right\|_{\infty}<1 \qquad \forall t>0.
\]
Then 
\[
     \left\|\chi_n\e^{-H_{\vartheta}\e^{\ci \omega}t}\right\|_{r}\leq
    k_4 2^{-\frac{np}{r}} t^{-\frac{p+4}{2r}}
\qquad \forall t\in(0,1),\,r>2.
\]

Let
\[
   \tilde{V}(x)=\sum_{n=0}^\infty 2^{\alpha(n+1)}\chi_n(x).
\]
If $r$ and $p$ are such that
\begin{equation}   \label{eq:prooftheoper2}
    \frac{p+4}{2r}<1 \qquad \text{and} \qquad \frac{p}{r}>\alpha, 
\end{equation}
then, for some $\varepsilon>0$,
\begin{equation} \label{eq:prooftheoper3}
   \left\|\tilde{V}\e^{-H_{\vartheta}\e^{\ci \omega}t}\right\|_{r}\leq
    k_5 \left(\sum_{n=0}^\infty 2^{n\left(\alpha-\frac{p}{r}  \right)}  \right)
t^{-\frac{p+4}{2r}}\leq k_6 t^{-1+\varepsilon} \qquad \forall t\in(0,1).
\end{equation}
This confirms \eqref{eq:prooftheoper4}.

Note that the condition \eqref{eq:prooftheoper2} is satisfied for $0\leq \alpha <2$, whenever $r>\frac{4}{2-\alpha}$ and $p\in (\alpha r,2r-4)$. That is precisely the requirement on $\alpha$ in the hypothesis above.  Fix $r$ and $p$ in this range. We now show that $A=\tilde{V}$ is a class~$\mathcal{PC}_r$ perturbation of $T=\e^{\ci\omega}H_{\vartheta}$. The operator $\tilde{V}\e^{-H_\vartheta \e^{i\omega}t}$ has integral kernel $\tilde{V}(x)M_{\vartheta}(\e^{\ci \omega}t,x,y)$. For all $t>0$ fixed, 
\[
    \int_{x\in \mathbb{R}}\int_{y\in \mathbb{R}} |\tilde{V}(x)M_{\vartheta}(\e^{\ci \omega}t,x,y)|^2 \mathrm{d}y \mathrm{d}x<\infty
\]
as a consequence of \eqref{eq:prooftheoper0} and the definition of $\tilde{V}$.
Then $\tilde{V}\e^{-H_\vartheta \e^{i\omega}t}\in \mathcal{C}_2$ and it is also continuous in $\mathcal{C}_2$ for all $t>0$. Hence $\tilde{V}\e^{-H_\vartheta \e^{i\omega}t}\in \mathcal{C}_q$ for all $q>2$ also and it is continuous in the norm of
$\mathcal{C}_q$. This includes $q=\infty$. Hence for all $f\in L^2(\mathbb{R})$, $\tilde{V}\e^{-H_\vartheta \e^{i\omega}t}f\in L^2(\mathbb{R})$. Thus
\[
       \operatorname{D}(\tilde{V})\supset \bigcup_{t>0}\e^{-H_\vartheta \e^{i\omega}t}(L^2(\mathbb{R})).
\]
Finally, the fact that
\[
      \int_0^1 \left\| \tilde{V}\e^{-H_\vartheta \e^{i\omega}t}\right\|_r\mathrm{d}t<\infty
\]
is guaranteed by \eqref{eq:prooftheoper3}. 

In order to complete the proof for  $|\omega|=\frac{\pi}{2}-|\vartheta|$, note that $\tilde{V}$ is invertible and that a generic $V$ satisfying \eqref{precisecondpot} with the same $\alpha$ is such that $\mathrm{D}(\tilde{V})=\mathrm{D}(V)$. Therefore Lemma~\ref{Th:perofclassPCqpert} ensures that $V$ is also a class~$\mathcal{PC}_r$ perturbation for $r$ in the stated range.

Our only additional comment about the case $|\omega|<\frac{\pi}{2}-|\vartheta|$ is that the exponent of $t$ in \eqref{eq:prooftheoper1} changes to $\frac{p+2}{2}$. This leads to replacing the left of \eqref{eq:prooftheoper2} by $\frac{p+2}{2r}$ and this yields $r>\frac{2}{2-\alpha}$.
\end{proof}

\begin{Rm} \label{remark:dissipativityofthepert}
Here the potential $V$ can be accretive or otherwise. For example $V(x)=\e^{ix}|x|^\alpha$ where $0<\alpha<2$ is included in this theorem.
\end{Rm}

By combining the above with Theorem~\ref{Th:Gibbsinsector} it follows that, for $\vartheta\not=0$, the non-selfadjoint Schr\"odinger operator $H_\vartheta+V$ is the generator of a Gibbs semigroup $\e^{-(H_\vartheta+V)\tau}\in \mathcal{C}_1$ for all $\tau\in \overline{\mathcal{S}_{\vartheta}}\setminus\{0\}$ holomorphic in the maximal sector $\mathcal{S}_{\vartheta}$. Moreover, for $|\omega| \leq \frac{\pi}{2}-|\vartheta|$ and $r$ in the range determined by \eqref{eq:rangeofr}, 
\[
\|\e^{-(H_{\vartheta}+V)\e^{\ci \omega} t}\|_r \sim \|\e^{-H_{\vartheta}\e^{\ci \omega} t}\|_r
\qquad t\to 0^+.
\]
If $|\omega|<\frac{\pi}{2}-|\vartheta|$, this range includes $r<2$ and we get from  Lemma~\ref{asymptoticssemigroup} that
\[
    \|\e^{-(H_{\vartheta}+V)\e^{\ci \omega} t}\|_2\sim t^{-\frac{1}{2}} \qquad t\to 0^+.
\]
From the interpolation inequality in $\mathcal{C}_r$ and the fact that $\e^{-H_{\vartheta} \e^{\ci \omega} t}$ are contraction semigroups, it follows that for $r>2$,
\[
\|\e^{-H_{\vartheta}\e^{\ci \omega} t}\|_r\leq \|\e^{-H_{\vartheta}\e^{\ci \omega} t}\|_2^{\frac{2}{r}} \|\e^{-H_{\vartheta}\e^{\ci \omega} t}\|_{\infty}^{1-\frac{2}{r}}\sim \begin{cases}
  t^{-\frac{1}{r}} &  |\omega| < \frac{\pi}{2}-|\vartheta| \\
  t^{-\frac{2}{r}} & |\omega| = \frac{\pi}{2}-|\vartheta|. \end{cases}
\]
Then,
\begin{equation} \label{eq:asymptrace}
\|\e^{-(H_{\vartheta}+V)\e^{\ci \omega} t}\|_r= \begin{cases}
  O(t^{-\frac{1}{r}}) &  |\omega| < \frac{\pi}{2}-|\vartheta| \\
  O(t^{-\frac{2}{r}}) & |\omega| = \frac{\pi}{2}-|\vartheta| \end{cases}
\end{equation}
as $t\to 0^+$. As we shall see next, combining this with \eqref{eq:spectheo} leads to asymptotics for the eigenvalues of the perturbed operator.

Let
\[
\operatorname{Spec} (H_{\vartheta}+V)=\{\lambda_n\}_{n=1}^\infty. 
\]
Denote
\begin{equation*} \label{eq:thebetan}
  \begin{aligned} \alpha_n&=\Re(\lambda_n), \\
    \beta_n^{\pm}&=\Re\left(\e^{\pm \ci \left(\frac{\pi}{2}-|\vartheta|\right)}\lambda_n\right)=\Re\left(\e^{\ci \omega}\lambda_n\right) & \text{ for } |\omega|=\frac{\pi}{2}-|\vartheta|.
\end{aligned}    
\end{equation*}
Since $\e^{-(H_\vartheta+V)\e^{\ci \omega}t}$ is compact for all $t>0$, it follows that $\alpha_n,\,\beta^{\pm}_n\to\infty$,  \emph{cf.} \cite[Theorem~8.2.13]{2007Davies}. 

\begin{Co} \label{Co:naturespectrumetal}
Let $V$ satisfy \eqref{precisecondpot}. Then, the resolvent $(H_\vartheta+V-z)^{-1}\in \mathcal{C}_q$ for all $q>1$. Moreover, $H_\vartheta+V$ has an infinite number of distinct eigenvalues and a complete set of root vectors\footnote{We follows the standard terminology here, meaning that the set of finite linear combinations of all the root vectors has zero as orthogonal complement.}. 
\end{Co}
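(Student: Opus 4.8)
The plan is to read off the three assertions (resolvent in $\mathcal{C}_q$ for $q>1$; infinitely many distinct eigenvalues; completeness of root vectors) from the Gibbs-semigroup property established above together with standard functional-calculus facts. First I would fix $\vartheta\neq 0$; the case $\vartheta=0$ follows since then $H_0+V$ is selfadjoint with compact resolvent (because $V$ is $H_0$-bounded with bound $0$), so all three claims are classical. So assume $\vartheta\neq 0$. By the discussion following Theorem \ref{Th:theopert}, $H_\vartheta+V$ generates a holomorphic $C_0$-semigroup with $\e^{-(H_\vartheta+V)\tau}\in\mathcal{C}_1$ for every $\tau\in\overline{\mathcal{S}_\vartheta}\setminus\{0\}$, and \eqref{eq:asymptrace} controls the small-$t$ blow-up of $\|\e^{-(H_\vartheta+V)\e^{\ci\omega}t}\|_r$: on the open rays $|\omega|<\tfrac{\pi}{2}-|\vartheta|$ it is $O(t^{-1/r})$ with $r$ allowed arbitrarily close to $\tfrac{2}{2-\alpha}$, hence $r$ arbitrarily close to $1$ when $\alpha$ is close to $0$ — but to get \emph{all} $q>1$ I instead use the semigroup property directly: for fixed $t_0>0$ the operator $\e^{-(H_\vartheta+V)t_0}$ is trace class, so its (nonzero) eigenvalues are summable.

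For the resolvent, I would apply the Laplace-transform identity \eqref{eq:Laplacetransform} (valid here since $H_\vartheta+V$ generates a bounded-for-large-$t$ $C_0$-semigroup after shifting by $\varphi(H_\vartheta+V)$): for $\Re z$ sufficiently negative,
\[
(H_\vartheta+V-z)^{-1}=\int_0^\infty \e^{zt}\,\e^{-(H_\vartheta+V)t}\,\mathrm{d}t,
\]
and the integrand is $\mathcal{C}_q$-valued, continuous in $\|\cdot\|_q$ for $t>0$, with $\|\e^{-(H_\vartheta+V)t}\|_q=O(t^{-1/q\,'})$ for a suitable exponent strictly less than $1$ by \eqref{eq:asymptrace} and interpolation (indeed $\|\e^{-(H_\vartheta+V)t}\|_q=O(t^{-1/q})$ for $q$ in the admissible range, and $\|\cdot\|_q$ is non-increasing in $q$), so the Bochner integral converges in $\|\cdot\|_q$ for every $q>1$. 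This gives $(H_\vartheta+V-z)^{-1}\in\mathcal{C}_q$ for one $z$, and hence for all $z$ in the resolvent set by the resolvent identity, for every $q>1$.

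Next, infinitely many distinct eigenvalues. Since $\e^{-(H_\vartheta+V)t}$ is compact for $t>0$ and injective (it is injective as part of a $C_0$-semigroup of a generator with compact resolvent — one may invoke \eqref{eq:spectheo}, which forces $\operatorname{Spec}(\e^{-(H_\vartheta+V)t})\setminus\{0\}$ to be an infinite sequence accumulating only at $0$, since otherwise $H_\vartheta+V$ would have finite spectrum and hence bounded resolvent, contradicting compactness of the resolvent on an infinite-dimensional space), the spectral mapping relation \eqref{eq:spectheo} transports this to $\operatorname{Spec}(H_\vartheta+V)$ being an infinite discrete set; each point is an eigenvalue because the resolvent is compact. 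Finally, for completeness of the root vectors I would invoke a Schatten-class completeness theorem for the trace-class operator $K:=\e^{-(H_\vartheta+V)t_0}$: by the discussion after Theorem \ref{Th:theopert}, $K$ lies in $\mathcal{C}_p$ for $p$ strictly less than $1$ is \emph{not} available, so instead I would use the sectoriality — $H_\vartheta+V$ is m-sectorial with vertex on a ray and semi-angle $<\pi/2$, its resolvent is in $\mathcal{C}_q$ for all $q>1$, hence in particular the numerical range lies in a sector of half-angle $<\pi/2$ and the resolvent decays; this is exactly the hypothesis of the classical theorem of Agmon–Keldysh type (see e.g.\ \cite[Theorem 10.1]{2007Davies} or Gohberg–Krein) guaranteeing completeness of the root subspaces whenever the operator has compact resolvent in $\mathcal{C}_q$ for some finite $q$ and the numerical range is contained in a sector of angle $<\pi$. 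Since both conditions hold, completeness follows.

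The main obstacle is the completeness statement: the resolvent being merely in $\bigcap_{q>1}\mathcal{C}_q$ (and generally \emph{not} in $\mathcal{C}_1$, since for $\alpha$ close to $2$ the exponent in \eqref{eq:asymptrace} is close to $1$) means one cannot use the cheapest trace-class completeness criterion, and one must combine the Schatten bound for some finite $q$ with the sectoriality of $H_\vartheta+V$ (angular opening $2|\vartheta|<\pi$) to fit the hypotheses of a Keldysh-type theorem; checking that the root vectors of $H_\vartheta+V$ coincide with the generalized eigenvectors of the compact operator $K$ (so that completeness of one yields completeness of the other) is the routine but necessary bookkeeping.
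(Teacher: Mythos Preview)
Your argument for the resolvent lying in $\mathcal{C}_q$ for all $q>1$ via the Laplace representation \eqref{eq:Laplacetransform} and the small-time bound \eqref{eq:asymptrace} is essentially the paper's own proof. For the remaining two assertions the paper takes a different route: it invokes \cite[Corollary~4.10]{2016Shkalikov}, treating $V$ as a \emph{completely subordinate} perturbation of the m-sectorial operator $H_\vartheta$ (whose resolvent is already in $\bigcap_{q>1}\mathcal{C}_q$, i.e.\ of ``order'' less than one in Shkalikov's terminology), and reads off both completeness and the infinitude of eigenvalues in one stroke.

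Your direct Keldysh-type approach on $H_\vartheta+V$ itself is also viable, but two points need repair. First, the step ``finite spectrum $\Rightarrow$ bounded resolvent $\Rightarrow$ contradiction'' does not stand: an injective compact operator on an infinite-dimensional space can have finite (even empty) nonzero spectrum --- the Volterra operator is the standard counterexample --- so compactness and injectivity of $\e^{-(H_\vartheta+V)t}$ alone do not force infinitely many eigenvalues. The clean fix is to reverse the order of the last two claims: once completeness of root vectors is established and each root subspace is finite-dimensional, infinitude of the spectrum is immediate on an infinite-dimensional space. Second, the completeness criterion you quote is stated too generously: ``resolvent in $\mathcal{C}_q$ for \emph{some} finite $q$'' together with numerical range in a sector of opening $<\pi$ is not sufficient in general; the standard Keldysh--Agmon hypothesis requires rays of resolvent decay dividing the plane into sectors each of opening $<\pi/q$. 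What rescues you here is precisely that you have the resolvent in $\mathcal{C}_q$ for \emph{every} $q>1$, so you may choose $q$ close enough to $1$ that $\pi/q$ exceeds the opening of the (shifted) numerical-range sector of $H_\vartheta+V$, which is strictly less than $\pi$. With these two corrections your argument goes through; the paper's appeal to Shkalikov packages the same ingredients (sectoriality plus Schatten resolvent of arbitrarily small order $>1$) but phrased as a perturbation result for $H_\vartheta$ rather than by working with $H_\vartheta+V$ directly.
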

\begin{proof}
By adding to $H_\vartheta+V$ a sufficiently large constant, without loss of generality we can assume that $\varphi(H_\vartheta+V)=-1$ and take $z=0$. The  inverse Laplace transform identity \eqref{eq:Laplacetransform} for $T=H_\vartheta+V$ gives
\begin{equation}
\label{eq:LaplaceTransformperturbedharmonic}
    (H_\vartheta+V)^{-1}f=\int_{0}^\infty \e^{-(H_\vartheta+V)t}f \mathrm{d}t \qquad \forall f\in L^2(\mathbb{R}).
\end{equation}
From \eqref{eq:asymptrace} and the assumption on the uniform growth bound, it follows that
\begin{align*}
\int_{0}^\infty &\|\e^{-(H_\vartheta+V)t}\|_q\mathrm{d}t  \\
&\leq \int_{0}^2 \|\e^{-(H_\vartheta+V)t}\|_q\mathrm{d}t+\int_{2}^\infty \|\e^{-(H_\vartheta+V)}\|_q\|\e^{-(H_\vartheta+V)(t-1)}\|_{\infty}\mathrm{d}t \\
&\leq k_6\int_0^2 t^{-\frac{1}{q}} \mathrm{d}t +k_7 \int_2^\infty \e^{-t} \mathrm{d}t<\infty. 
\end{align*}
Then, the integral in \eqref{eq:LaplaceTransformperturbedharmonic} is absolutely convergent in $\|\cdot\|_q$ and so the associated operator belongs to $\mathcal{C}_q$.

The second and last statements are classical. A concyse proof is achieved by means of a direct application of \emph{e.g.} \cite[Corollary~4.10]{2016Shkalikov}. Indeed, taking $V=0$ in the first statement just shown, yields that $H_\vartheta$ has ``order'', in the sense of \emph{loc. cit.} p.918, any constant less than one. We know that $H_\vartheta$ is m-sectorial with angle $\vartheta=\frac{\gamma \pi}{2}$ for $\gamma<1$ and $V$ is $H_\vartheta$-bounded with bound zero. That is ``completely subordinate'' in the terminology of  \emph{loc. cit.} p.910, so the hypotheses of the mention corollary are satisfied.
\end{proof}

As we shall see next, lower bounds on the asymptotic behaviour of $\alpha_n$ and $\beta_{n}^{\pm}$ can be derived from Lidskii's inequality.

\begin{Co} \label{Co:asymptoticeigenvalues}
Assume that $\vartheta\not=0$. Let $V$ satisfy \eqref{precisecondpot}. Then
there exist constants $K>0$ and $n_0\in \mathbb{N}$ such that \[
       \alpha_n\geq Kn \quad\text{and} \quad \beta_n^{\pm}\geq Kn^{\frac12} \qquad \forall n\geq n_0.
\]
\end{Co}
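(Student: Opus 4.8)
The plan is to read off the eigenvalue bounds from the small-time decay of the Schatten norms recorded in \eqref{eq:asymptrace}, following the classical chain: a bound on $\|\e^{-(H_\vartheta+V)\e^{\ci\omega}t}\|_r$ as $t\to 0^+$ produces, via Lidskii's inequality, a bound on $\sum_n \e^{-r\Re(\e^{\ci\omega}\lambda_n)t}$; a Tauberian optimisation in $t$ converts this into a bound on the relevant eigenvalue counting function; and the counting bounds obtained at the angles $\omega=0$ and $\omega=\pm(\frac{\pi}{2}-|\vartheta|)$ are then combined into the asserted pointwise lower bounds for $\alpha_n$ and $\beta_n^{\pm}$.

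First I would fix $r$ large enough that \eqref{eq:rangeofr}, and hence \eqref{eq:asymptrace}, are available at $\omega=0$ and at $\omega=\pm(\frac{\pi}{2}-|\vartheta|)$; any $r>\frac{4}{2-\alpha}$ will do. For each such $\omega$ the family $\{\e^{-(H_\vartheta+V)\e^{\ci\omega}t}\}_{t>0}$ is a $C_0$-semigroup of trace class operators with generator $-\e^{\ci\omega}(H_\vartheta+V)$, obtained by rotating the holomorphic Gibbs semigroup of Section~\ref{sec3}. Since $H_\vartheta+V$ has compact resolvent (Corollary~\ref{Co:naturespectrumetal}), the spectral mapping identity \eqref{eq:spectheo} together with the agreement of the associated Riesz projections identifies the eigenvalues of $\e^{-(H_\vartheta+V)\e^{\ci\omega}t}$, counted with algebraic multiplicity, as $\e^{-\e^{\ci\omega}\lambda_n t}$, whose moduli are $\e^{-\Re(\e^{\ci\omega}\lambda_n)t}$. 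Lidskii's inequality in $\mathcal{C}_r$ then gives $\sum_n \e^{-r\Re(\e^{\ci\omega}\lambda_n)t}\le\|\e^{-(H_\vartheta+V)\e^{\ci\omega}t}\|_r^r$, and feeding in \eqref{eq:asymptrace} yields, for all $t$ in a fixed interval $(0,t_0]$, the estimates $\sum_n \e^{-r\alpha_n t}\le C\,t^{-1}$ when $\omega=0$ and $\sum_n \e^{-r\beta_n^{\pm}t}\le C\,t^{-2}$ when $\omega=\pm(\frac{\pi}{2}-|\vartheta|)$.

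The Tauberian step is routine: for $\mu>0$ large, keeping in the first sum only the indices with $\alpha_n\le\mu$ gives $\#\{n:\alpha_n\le\mu\}\,\e^{-r\mu t}\le C\,t^{-1}$, and the choice $t=(r\mu)^{-1}$ (admissible once $\mu$ is large) produces $\#\{n:\alpha_n\le\mu\}\le C_1\mu$; applied to the second sum with $t=2(r\mu)^{-1}$ it gives $\#\{n:\beta_n^{\pm}\le\mu\}\le C_2\mu^2$. All three counting functions are finite because $\alpha_n,\beta_n^{\pm}\to\infty$, as noted just before Corollary~\ref{Co:naturespectrumetal}. I would then re-index $\operatorname{Spec}(H_\vartheta+V)=\{\lambda_n\}$ so that $g_n:=\min\{\alpha_n,(\beta_n^+)^2,(\beta_n^-)^2\}$ is non-decreasing; since $\{n:g_n\le\mu\}\subseteq\{n:\alpha_n\le\mu\}\cup\{n:\beta_n^+\le\sqrt{\mu}\}\cup\{n:\beta_n^-\le\sqrt{\mu}\}$, the three bounds combine to $\#\{n:g_n\le\mu\}\le C_3\mu$ for $\mu$ large, whence $g_n\ge n/C_3$ for all $n\ge n_0$. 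Because $g_n\le\alpha_n$ and $g_n\le(\beta_n^{\pm})^2$, this is precisely $\alpha_n\ge n/C_3$ and $\beta_n^{\pm}\ge\sqrt{n/C_3}$, i.e. the claim with $K=C_3^{-1}$ (taking $C_3\ge1$, so that $C_3^{-1}\le C_3^{-1/2}$).

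The delicate points here are purely matters of bookkeeping. One is the identification of the eigenvalues of the rotated semigroup, with correct algebraic multiplicities, as $\e^{-\e^{\ci\omega}\lambda_n t}$, which rests on the compactness of the resolvent of $H_\vartheta+V$ and on \eqref{eq:spectheo}. The other, and the step I expect a careful reader to want spelled out, is the selection of a single enumeration of the spectrum for which the linear bound on $\alpha_n$ and the square-root bounds on $\beta_n^{\pm}$ hold simultaneously; the auxiliary quantity $g_n$ does exactly this reconciliation. Beyond these, the argument only uses the Schatten estimate \eqref{eq:asymptrace}, Lidskii's inequality, and the elementary optimisation above.
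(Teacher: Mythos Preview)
Your proof is correct and follows essentially the same route as the paper: Lidskii's inequality applied to the rotated semigroups at $\omega=0$ and $\omega=\pm(\frac{\pi}{2}-|\vartheta|)$, combined with \eqref{eq:asymptrace}, followed by a Tauberian optimisation in $t$. The one notable addition is your device of ordering by $g_n=\min\{\alpha_n,(\beta_n^+)^2,(\beta_n^-)^2\}$ so as to produce a \emph{single} enumeration satisfying all three bounds simultaneously; the paper's proof instead explicitly allows the ordering to differ between $\alpha_n$, $\beta_n^+$ and $\beta_n^-$, so your version is slightly sharper on this point of bookkeeping.
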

\begin{proof}  
  Recall that \[ \operatorname{Spec}(\e^{-(H_{\vartheta}+V)\e^{\ci \omega}t})=\{ \e^{-\e^{\ci \omega}\lambda_k t}  \}_{k=1}^\infty\cup \{0\}. \]
  Let $r>\frac{4}{2-\alpha}$.
For $t=rs$, 
\[
    \sum_{k=1}^\infty|\e^{-\e^{\ci \omega}\lambda_k s} |^r \leq \|\e^{-(H_{\vartheta}+V)\e^{\ci \omega}s} \|_{r}^{r}=O(s^{-2}) \qquad s\to 0^+. 
\]
Then 
\[
     \sum_{k=1}^\infty \e^{-\beta^{\pm}_k t} = \sum_{k=1}^\infty
     (\e^{-\beta_k^{\pm}s})^r =O( s^{-2})=O( t^{-2}) \qquad t\to 0^+.
\]
Assume that the eigenvalues are ordered so that $\beta_n^{\pm}$ is non-decreasing (with possibly different orders for the two cases $\pm$). Then
\[
    n\e^{-\beta_n^\pm t}=\sum_{k=1}^n\e^{-\beta_n^\pm t}\leq  \sum_{k=1}^\infty \e^{-\beta^{\pm}_k t}.
\]
Hence there exist a constant $k_8>0$ such that
\[
    n\e^{-\beta^{\pm}_nt}\leq \frac{k_8}{t^2} \qquad \forall 0<t\leq t_0
    \]
    where $t_0>0$ is small enough, this for all $n\in \mathbb{N}$. Take $n_0\in \mathbb{N}$ large enough such that $\frac{1}{\beta^{\pm}_{n_0}}<t_0$. Putting $t=\frac{1}{\beta^{\pm}_n}$, gives \[
    n\e^{-1}\leq k_8(\beta_n^{\pm})^2 \qquad \qquad \forall n\geq n_0.     \]
This ensures the validity of the claim for $\beta_n^\pm$.  The conclusion for the case of $\alpha_n$ is achieved with a similar argument noting that the asymptotic changes to  $n\e^{-\alpha_nt}=O(t^{-1})$ for $t\to 0^+$.
\end{proof}

The estimate above is optimal for $\alpha_n$, as it should hold true for $V=0$. Since
\[
    \sum_{n=1}^\infty \e^{-n^{1/2}t}\geq \int_1^\infty \e^{-x^{1/2} t}\mathrm{d}x\sim t^{-2} \qquad t\to 0^+,
    \]
we know that the exponent $\frac12$ for $\beta_n^{\pm}$ above is also optimal, given the asymptotic behaviour of the $\mathcal{C}_r$ norm of the semigroup. However, it is not clear that the exponent in the latter is optimal for potentials satisfying \eqref{precisecondpot}. That is, we do not know if the exponent for $t$ in the formula \eqref{eq:asymptrace} is optimal.

From general principles, it follows that the $\varepsilon$-pseudospectrum
\[
\operatorname{Spec}_{\varepsilon}(H_{\vartheta})\subset \{z+s\e^{\ci\omega}:z\in \operatorname{Num}(H_{\vartheta}) ,\,0\leq s\leq \varepsilon,\, |\omega|\leq \pi\}  
\] for all $\varepsilon>0$. In fact, $\operatorname{Spec}_{\varepsilon}(H_{\vartheta})$ is known to obey the following more precise enclosures for fixed $1< q_1\leq 3<q_2<\infty$. Write 
\[
    R_q=\{r+r^q \e^{\ci \omega}:r\geq 0,\, |\omega|\leq |\vartheta|\}.
\]
For all $\varepsilon_1>0$ there exists $E_1>0$ such that
\[
    (E_1+R_{q_1}) \subset \operatorname{Spec}_{\varepsilon_1}(H_{\vartheta}),
\]
see \cite{2001Boulton}. But for all $\gamma,\, E_2>0$ there exist $\varepsilon_2>0$ such that  
\[
\operatorname{Spec}_{\varepsilon}(H_{\vartheta}) \subset
(E_2+R_{q_2}) \cup \bigcup_{n=1}^\infty \{z\in\mathbb{C}:|2n+1-z|\leq \gamma\} \qquad \forall \varepsilon\leq \varepsilon_2,
\]
see \cite{2006Pravda-Starov}. See also \cite{2017KrejcirikSiegl,2004Denckeretal, 1999Zworski}. Then, according to Corollary~\ref{Co:asymptoticeigenvalues}, asymptotically the eigenvalues of $H_\vartheta+V$ lie way inside $\operatorname{Spec}_\varepsilon(H_\vartheta)$ and the distance from $\partial\operatorname{Spec}_\varepsilon(H_\vartheta)$ to $\lambda_n$ grows (at least like $n^{1/2}$) as $n\to \infty$.

The following result gives an indication of the shape of the pseudospectra of $H_\vartheta+V$. It implies that the distance from the real axis to $z\in\partial \operatorname{Spec}_{\varepsilon}(H_\vartheta+V)$ is $o(\Re(z))$ as $z\to \infty$.

\begin{Co} \label{Co:asymptoticperturbedHO} 
Assume that $\vartheta\not=0$. Let $V$ satisfy \eqref{precisecondpot}. Then 
\[
    \lim_{\rho\to \infty} \| (H_\vartheta+V -
    \e^{\pm \ci \vartheta} \rho-\beta)^{-1}\|=0 
    \]
 for all $\beta\in \mathbb{R}$.   
\end{Co}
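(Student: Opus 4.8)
The plan is to deduce this from Theorem~\ref{Th:resolventnormatinfinity} applied not to $H_\vartheta+V$ itself but to the rotated generators $\e^{\pm\ci(\frac{\pi}{2}-|\vartheta|)}(H_\vartheta+V)$, exactly as in the statement of Theorem~\ref{Th:theopert} and Corollary~\ref{Co:asymptoticeigenvalues} with $|\omega|=\frac{\pi}{2}-|\vartheta|$. First I would fix the sign, say the $+$ case, and set $\omega=\frac{\pi}{2}-|\vartheta|$ and $T_\omega=\e^{\ci\omega}(H_\vartheta+V)$. By Theorem~\ref{Th:theopert}, $V$ is a class~$\mathcal{PC}_r$ perturbation of $\e^{\ci\omega}H_\vartheta$ for $r$ in the range \eqref{eq:rangeofr}, and $\e^{\ci\omega}H_\vartheta$ generates a Gibbs semigroup (this is the content of Section~\ref{sec2}, since $\e^{\ci\omega}t\in\overline{\mathcal{T}_\vartheta}$ for $|\omega|=\frac{\pi}{2}-|\vartheta|$ is the boundary ray of the maximal semi-module and the $\mathcal{C}_2$-norm there is finite by Lemma~\ref{asymptoticssemigroup}). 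Hence by Lemma~\ref{Th:classPCpperturbation}, $T_\omega$ is the generator of a $C_0$-semigroup with $\e^{-T_\omega t}\in\mathcal{C}_1$ for all $t>0$; in particular $\e^{-T_\omega t}$ is compact for all $t>0$, so Theorem~\ref{Th:resolventnormatinfinity} applies to $T_\omega$.

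Next I would relate the resolvent of $T_\omega$ to the quantity we want. We have $(T_\omega-z)^{-1}=\e^{-\ci\omega}(H_\vartheta+V-\e^{-\ci\omega}z)^{-1}$, so $\|(T_\omega-z)^{-1}\|=\|(H_\vartheta+V-\e^{-\ci\omega}z)^{-1}\|$. Writing $z=c-\ci y$ with $c<-\varphi(T_\omega)$ fixed (so $\Re z<-\varphi(T_\omega)$), the point $\e^{-\ci\omega}z$ traces, as $y\to+\infty$, a ray parallel to $\e^{-\ci\omega}\cdot(-\ci)=-\ci\e^{-\ci\omega}$; since $\omega=\frac{\pi}{2}-|\vartheta|$, we compute $-\ci\e^{-\ci\omega}=-\ci\e^{-\ci(\pi/2-|\vartheta|)}=-\e^{\ci|\vartheta|}$ (up to checking the sign convention, and using the $y\to-\infty$ branch or the $-$ rotation to pick up the other sign). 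Thus for suitable choice of sign of $y$ and of $\omega=\pm(\frac{\pi}{2}-|\vartheta|)$, the line $\{\e^{-\ci\omega}(c-\ci y):y\in\mathbb{R}\}$ is precisely the line through $\e^{-\ci\omega}c$ in the direction $\mp\e^{\pm\ci\vartheta}$, i.e. the set $\{\e^{\pm\ci\vartheta}\rho+\beta_0 : \rho\in\mathbb{R}\}$ for an appropriate real-for-the-rotated-picture offset — here one must be a little careful, because $\e^{-\ci\omega}c$ is genuinely complex, so I would instead choose $z$ to move along a line so that $\e^{-\ci\omega}z$ has the form $-\e^{\pm\ci\vartheta}\rho-\beta$ with $\beta\in\mathbb{R}$ and $\rho\to+\infty$; this fixes $\Re(\e^{-\ci\omega}z)$ to be an affine function of $\rho$ tending to $-\infty$, which does lie eventually in the half-plane $\Re z<-\varphi(T_\omega)$ after translating by a constant, and the constant translation is harmless since it only shifts $\beta$.

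Concretely, I would absorb the issue by first replacing $H_\vartheta+V$ with $H_\vartheta+V+c_0$ for a large real constant $c_0$ so that $\varphi(H_\vartheta+V+c_0)<0$ and also $\varphi(\e^{\ci\omega}(H_\vartheta+V+c_0))<0$ — possible because adding a positive constant shifts the numerical range into the correct sector and the spectral bound equals the uniform growth bound by compactness, as recorded after \eqref{eq:spectheo}. Then $\e^{\pm\ci\vartheta}\rho$ for $\rho$ large has $\e^{\ci\omega}(\e^{\pm\ci\vartheta}\rho)$ with real part $\rho\cos(\omega\pm\vartheta)$; for the matching sign this is $\rho\cos(\frac{\pi}{2}-|\vartheta|\pm\vartheta)$, one of which equals $\rho\cos(\frac{\pi}{2})=0$ and the other $\rho\cos(\frac{\pi}{2}-2|\vartheta|)>0$ — so I must rotate the \emph{other} way, $\omega=-(\frac{\pi}{2}-|\vartheta|)$ paired with the $\e^{\ci\vartheta}$ ray, etc. After getting the pairing right, $\Re(\e^{\ci\omega}\e^{\pm\ci\vartheta}\rho)=0$ identically, which is exactly the degenerate boundary behaviour: the rotated point $\e^{\ci\omega}(\e^{\pm\ci\vartheta}\rho+\beta)$ has constant real part $\Re(\e^{\ci\omega}\beta)$, independent of $\rho$, while its imaginary part runs to $\pm\infty$. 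Translating $H_\vartheta+V$ by a further real constant makes that constant real part strictly less than $-\varphi$, and then Theorem~\ref{Th:resolventnormatinfinity} with $y\to\pm\infty$ gives $\|(T_\omega-\e^{\ci\omega}(\e^{\pm\ci\vartheta}\rho+\beta))^{-1}\|=\|(H_\vartheta+V-\e^{\pm\ci\vartheta}\rho-\beta)^{-1}\|\to 0$ as $\rho\to\infty$, which is the claim.

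The main obstacle is purely the trigonometric bookkeeping: identifying the correct pairing of the rotation sign $\omega=\pm(\frac{\pi}{2}-|\vartheta|)$ with the ray direction $\e^{\pm\ci\vartheta}$ so that the rotated rays become vertical lines in the spectral-bound half-plane, and verifying that the constant real part can be pushed below $-\varphi(T_\omega)$ by a harmless real shift. Once that is set up, everything else is a direct citation of Theorem~\ref{Th:resolventnormatinfinity}, whose hypotheses are supplied by Theorem~\ref{Th:theopert} together with Lemma~\ref{Th:classPCpperturbation}.
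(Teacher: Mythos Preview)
Your strategy --- rotate by $\omega=\pm(\tfrac{\pi}{2}-|\vartheta|)$ so that the rays $\e^{\pm\ci\vartheta}\rho+\beta$ become vertical lines for the rotated generator $T_\omega=\e^{\ci\omega}(H_\vartheta+V)$, and then invoke Theorem~\ref{Th:resolventnormatinfinity} --- is exactly the paper's opening move, and your trigonometric identification is fine: with the correct pairing one gets $\Re\big(\e^{\ci\omega}(\e^{\pm\ci\vartheta}\rho+\beta)\big)=\beta\sin|\vartheta|$, independent of $\rho$.

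The gap is in the sentence ``Translating $H_\vartheta+V$ by a further real constant makes that constant real part strictly less than $-\varphi$.'' This is false: a real shift $H_\vartheta+V\mapsto H_\vartheta+V+c_0$ moves the rotated spectral abscissa by $c_0\sin|\vartheta|$ \emph{and} moves the real part of the rotated target point by the same $c_0\sin|\vartheta|$ (since you must simultaneously replace $\beta$ by $\beta+c_0$ to describe the same resolvent). The inequality $\beta\sin|\vartheta|<-\varphi(T_\omega)=\inf_n\beta_n^{\pm}$ required by Theorem~\ref{Th:resolventnormatinfinity} is therefore invariant under real translation, and your argument as written only yields the conclusion for those $\beta$ satisfying it --- i.e.\ for $\beta$ sufficiently negative, which is precisely what the paper says one gets ``by rotating the operator and directly applying Theorem~\ref{Th:resolventnormatinfinity}''.

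To reach all $\beta\in\mathbb{R}$ the paper brings in an additional ingredient you have not used: Corollary~\ref{Co:asymptoticeigenvalues}. Given any $\gamma\in\mathbb{R}$, only finitely many eigenvalues $\lambda_1,\dots,\lambda_N$ lie outside $\gamma+\mathcal{S}(-|\vartheta|,|\vartheta|)$. One then splits $L^2(\mathbb{R})=\mathcal{M}_1\dotplus\mathcal{M}_2$ via the Riesz projector onto $\{\lambda_1,\dots,\lambda_N\}$. On the finite-dimensional invariant subspace $\mathcal{M}_1$ the resolvent decays trivially; on $\mathcal{M}_2$ the restricted rotated generator has $-\varphi\big(\e^{\ci\omega}[H_\vartheta+V]_2\big)\geq\gamma\sin|\vartheta|$, so Theorem~\ref{Th:resolventnormatinfinity} now covers every $\beta<\gamma$. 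Since $\gamma$ is arbitrary, all $\beta$ are obtained. This spectral-decomposition step is the missing idea in your proposal.
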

\begin{proof}
  By rotating the operator and directly applying Theorem~\ref{Th:resolventnormatinfinity}, the conclusion follows for all $\beta$ sufficiently negative. We use Corollary~\ref{Co:asymptoticeigenvalues} and a spectral decomposition similar to that in \cite[\S2.2]{1980Davies} to show the property for all $\beta\in \mathbb{R}$. 

Fix $\gamma\in \mathbb{R}$. By virtue of Corollary~\ref{Co:asymptoticeigenvalues}, there exists $N\in \mathbb{N}$ such that
  \[
      \{\lambda_n\}_{n=N+1}^\infty \subset \mathcal{S}(-|\vartheta|,|\vartheta|)+\gamma. 
  \]
Let $C$ be a simple Jordan curve such that only $\{\lambda_n\}_{n=1}^N$ are in its interior. Let
  \[
     P=\frac{1}{2\pi \ci} \int_{C}(z-H_\vartheta-V)^{-1} \mathrm{d}z
     \]
     be the corresponding Riesz projector. Let
     \[
 \mathcal{M}_1=P[L^2(\mathbb{R})]\subset \mathrm{D}(H_\vartheta) \quad \text{and} \quad \mathcal{M}_2=(I-P)[L^2(\mathbb{R})].  
\]
The subspace $\mathcal{M}_1\subset \mathrm{D}(H_\vartheta)$ is finite-dimensional, $L^2(\mathbb{R})=\mathcal{M}_1 + \mathcal{M}_2$ and $\mathcal{M}_1 \cap \mathcal{M}_2=\{0\}$. Generally there is no orthogonality between $\mathcal{M}_1$ and $\mathcal{M}_2$.

Let
\[
  [H_\vartheta+V]_j=(H_\vartheta+V)\upharpoonright_{\mathcal{M}_j}:\mathcal{M}_j\cap \mathrm{D}(H_\vartheta) \longrightarrow \mathcal{M}_j
    \]
    denote the corresponding restriction operators. Then \[
  \operatorname{Spec}([H_\vartheta+V]_1)=\{\lambda_n\}_{n=1}^N \quad \text{and}\quad \operatorname{Spec}([H_\vartheta+V]_2)=\{\lambda_n\}_{n=N+1}^\infty.
  \]
  Since $\mathcal{M}_j$ are invariant subspaces for the resolvent, then they are also invariant under the action of the $C_0$-semigroup (this is guaranteed from the fact that the latter commutes with the resolvent). The restriction operators are generators of the corresponding Gibbs semigrous on the subspaces, that is \cite[Theorem~2.20]{1980Davies}\[\e^{-[H_\vartheta +V]_j\tau}=\e^{-(H_\vartheta +V)\tau}\upharpoonright_{\mathcal{M}_j}:\mathcal{M}_j\longrightarrow \mathcal{M}_j\] for all $\tau\in \overline{\mathcal{S}_\vartheta}\setminus\{0\}$ holomorphic in $\mathcal{S}_\vartheta$. Since the restriction to $\mathcal{M}_1$ is bounded, we have \[\lim_{|z|\to\infty} \|([H_\vartheta+V]_1-z)^{-1}\|=0.\] According to Theorem~\ref{Th:resolventnormatinfinity} applied to $T=\e^{\pm\ci\left(\frac{\pi}{2}-|\vartheta|\right)}[H_\vartheta+V]_2$, we have \[\lim_{\rho \to \infty} \|([H_\vartheta+V]_2-\e^{\pm \vartheta} \rho-\beta)^{-1}\|=0 \qquad \forall \beta<\gamma.\] Since \[ \|(H_\vartheta+V-z)^{-1}\|\leq \|([H_\vartheta+V]_1-z)^{-1}\|+\|([H_\vartheta+V]_2-z)^{-1}\|,\] the conclusion indeed follows for all $\beta<\gamma$. We complete the proof by choosing $\gamma$ arbitrarily large.  
\end{proof}

\bibliographystyle{plain}

\end{document}